\newcommand{\R}{\mathbb{R}}
\newcommand{\rank}{\mathrm{rank}}
\newtheorem{defi}{Definition}
\newtheorem{thm}{Theorem}
\newtheorem{lem}{Lemma}
\newtheorem{prop}{Proposition}
\newtheorem{cor}{Corollary}
\begin{document}

\title{On the Combinatorial Diameters\\ of Parallel and Series Connections}

\date{}

\author{Steffen Borgwardt\inst{1} \and Weston Grewe\inst{1} \and Jon Lee\inst{2}}


\institute{University of Colorado Denver \and University of Michigan, Ann Arbor}

\maketitle

\begin{abstract}
The investigation of combinatorial diameters of polyhedra is a classical topic in linear programming due to its connection with the possibility of an efficient pivot rule for the simplex method. We are interested in the diameters of polyhedra formed from the so-called parallel or series connection of oriented matroids: oriented matroids are the natural way to connect representable matroid theory with the combinatorics of linear programming, and these connections are fundamental operations for the construction of more complicated matroids from elementary matroid blocks.

\hspace{15pt} We prove that, for polyhedra whose combinatorial diameter satisfies the Hirsch-conjecture bound regardless of the right-hand sides in a standard-form description, the diameters of their parallel or series connections remain small in the Hirsch-conjecture bound. These results are a substantial step toward devising a diameter bound for all polyhedra defined through totally-unimodular matrices based on Seymour's famous decomposition theorem.

\hspace{15pt} Our proof techniques and results exhibit a number of interesting features. While the parallel connection leads to a bound that adds just a constant, for the series connection one has to linearly take into account the maximal value in a specific coordinate of any vertex. Our proofs also require a careful treatment of non-revisiting edge walks in degenerate polyhedra, as well as the construction of edge walks that may take a `detour' to facets that satisfy the non-revisiting conjecture when the underlying polyhedron may not. 
\end{abstract}
{\bf{Keywords}:} {combinatorial diameter, Hirsch conjecture, oriented matroid, parallel connection, series connection, 2-sum}
\\\\\noindent
{\bf{MSC}:} {52B05, 52B40, 52C40, 90C05}

\section{Introduction}

The \textit{combinatorial diameter}, or just \textit{diameter}, of a polyhedron is the maximum number of edges needed to form a walk between any two vertices of a polyhedron. Diameters of polyhedra is a classical field of study, almost as old as the inception of the simplex method, due to the possibility of providing a lower bound on the worst-case number of iterations of the simplex method. In particular, if there exists a polyhedron of exponential diameter (in the number of facets $f$ or dimension $d$), then there cannot exist an efficient pivot rule.

The famous Hirsch conjecture (see, e.g., \cite{d-63}) claimed that the diameter of a polyhedron with $f$ facets and dimension $d$ is bounded above by $f-d$. While disproved in general, known counterexamples \cite{kw-67,s-11} only violate it linearly. The `polynomial Hirsch conjecture', replacing the bound $f-d$ by a polynomial in $f-d$, remains open in general. Further, many important classes of polyhedra in combinatorial optimization, such as $0/1$ polytopes \cite{n-89} and network-flow polytopes \cite{bdf-17}, do satisfy the Hirsch-conjecture bound. For many other classes, validity of the Hirsch-conjecture bound remains open. The best known general bounds are quasi-polynomial, $f^{\log d +2}$ \cite{kk-92}, $(f-d)^{\log d}$ \cite{t-14}, $(f-d)^{\log O(\frac{d}{\log d})}$ \cite{s-19}, or polynomial in $d$ and the largest absolute value $\Delta$ of a subdeterminant of the constraint matrix, $O(\Delta^2 d^4 (\log d\Delta))$ (and $O(\Delta^2 d^{3.5}(\log d\Delta))$ for bounded polytopes)  \cite{bseh-12}. For the important class of polyhedra defined through totally-unimodular matrices, there is a well-known bound of $O(f^{16}d^3(\log fd)^3)$ \cite{df-94}, and, as $\Delta = 1$, the bounds of \cite{bseh-12} improve to $O(d^4 (\log d))$ and $O(d^{3.5}(\log d))$, respectively. For a survey, see \cite{ks-10}.

In the literature, the diameters of various polyhedra related to submodular functions
have been studied.
These include matroid polyhedra and polymatroids.
While the polyhedra that we study are generally  neither  matroid polyhedra nor polymatroids,
it is interesting to know that such
polyhedra do satisfy the Hirsch conjecture and more.
Matroid polyhedra are special $0/1$ polytopes, and thus the bound of Naddef applies \cite{n-89}. However, both types of polyhedra satisfy even stronger bounds; in particular the diameter has an upper bound of two times the size of the ground set \cite{t-92}.

The polyhedra that we study in what follows do not belong to these classes, but our contributions do strongly relate to matroids. 
We are interested in the diameters of polyhedra arising from the so-called parallel or series connection of matrices. These are the representable-matroid cases
of classical matroid operations to `connect' a pair of matroids on a single element from the ground set of each, to form a larger matroid (see \cite[Section 7.1]{oxley-06}). Both can be used to define the 2-sum of a pair of matroids. Because the matrices that we consider in linear programming are real (see \cite[Chapter 10]{bjo-99}), it is natural to also regard these operations as acting on oriented matroids (see \cite{l-89,l-90}, for example).


\subsection{Polyhedra of Parallel and Series Connections}

We begin by recalling the definitions of  parallel and series connections of matrices, and use it to define the equivalent notions for related polyhedra. Throughout, we consider two matrices $\bar{A}, \bar{B}$ with the following special forms: 
\[
\bar{A} := 
\begin{bmatrix}
    A & 0 \\
    a & 1
\end{bmatrix} \in \mathbb{R}^{m_1 \times n_1} \text { and } \bar{B} := 
\begin{bmatrix}
    1 & b \\
    0 & B
\end{bmatrix} \in \mathbb{R}^{m_2 \times n_2},
\]
where $A$ is a matrix in $\mathbb{R}^{(m_1-1) \times (n_1-1)}$, $a$ is a row vector in $\mathbb{R}^{n_1-1}$, $B$ is a matrix in $\mathbb{R}^{(m_2-1) \times (n_2-1)}$ and $b$ is a row vector in $\mathbb{R}^{n_2-1}$. The final column of $\bar{A}$ and first column of $\bar{B}$ are unit columns in $\mathbb{R}^{m_1}$ or $\mathbb{R}^{m_2}$  with a single entry $1$ in the final or first row, respectively.

First, we define the parallel and series connection for matrices $\bar{A}$ and $\bar{B}$. 

\begin{defi}[Parallel Connection]\label{def:P}
    The parallel connection of $\bar{A}$ and $\bar{B}$ is the matrix 
    \[
        P(\bar{A}, \bar{B}) := 
        \begin{bmatrix}
            A & 0 & 0 \\
            a & 1 & b \\
            0 & 0 & B
        \end{bmatrix}.
    \]
\end{defi}


\begin{defi}[Series Connection]\label{def:S}
    The series connection of $\bar{A}$ and $\bar{B}$ is the matrix 
    \[
        S(\bar{A}, \bar{B}) := 
        \begin{bmatrix}
            A & 0 & 0 \\
            a & 1 & 0 \\ 
            0 & 1 & b \\
            0 & 0 & B
        \end{bmatrix}.
    \]
\end{defi}

These connections are classical matroid operations (see \cite{oxley-06}) applied to represented matroids. From each matroid, one chooses an element from the ground set for the construction. These elements are represented in the matrices as the final column of $\bar{A}$ and the first column of $\bar{B}$, respectively. The specific form of $\bar{A}$ or $\bar{B}$ is not a restriction; any element of the ground set that is not a loop in the matroid can be chosen for the operation: through some elementary row operations and a reordering of the columns, any non-zero column of a matrix could be transformed into a unit column and moved to the first or final index, respectively, to obtain the form of $\bar{A}$ or $\bar{B}$. Once in this form, the operations can be applied.

If the matroids both happen to be graphic, then it is easy to describe the parallel connection and series connection
as operations on the underlying graphs. For this, we assume that we have a pair of graphs $G_1$ and $G_2$
with a unique edge $e$ that lies in both graphs.
We assign $e$ an orientation in each graph.
Parallel connection means gluing together $G_1$ and $G_2$ on $e$, respecting the orientation of $e$. Series connection is a bit more complicated: one glues $G_1\setminus e$ and $G_2\setminus e$ together on the endpoints of $e$, respecting the orientation of $e$, then splits the vertex at the head of $e$ into two vertices, and then joins these two vertices with an edge labeled $e$. An example of these operations is displayed in Figure \ref{par_series_fig}.

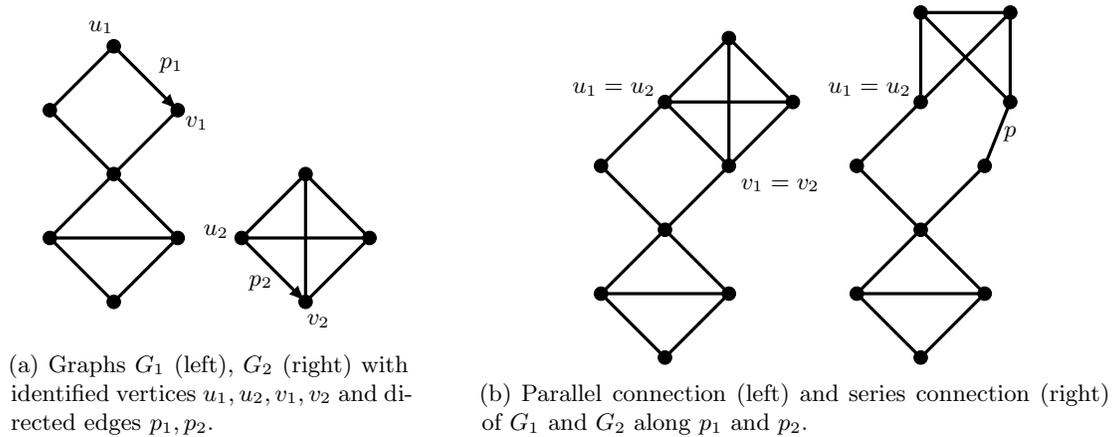
\begin{figure}
\centering
\begin{subfigure}[b]{.35\linewidth}
    \centering
    \begin{tikzpicture}[scale = 0.85]
        \draw[very thick] (1, 0) -- (0, 1) -- (1, 2) -- (0, 3) -- (1, 4);
        \draw[very thick, -latex] (1, 4) -- (2, 3);
        \draw[very thick] (2, 3) -- (1, 2) -- (2, 1) -- (1, 0);
        \draw[very thick] (0, 1) -- (2, 1);
    
        \draw[fill = black] (1, 0) circle (3pt);
        \draw[fill = black] (0, 1) circle (3pt);
        \draw[fill = black] (2, 1) circle (3pt);
        \draw[fill = black] (1, 2) circle (3pt);
        \draw[fill = black] (0, 3) circle (3pt);
        \draw[fill = black] (2, 3) circle (3pt);
        \draw[fill = black] (1, 4) circle (3pt);
    
        \node at (1.9, 3.7) {\small $p_1$};
        \node at (0.8, 4.3) {\small $u_1$};
        \node at (2.3, 2.8) {\small $v_1$};

        \draw[very thick] (3, 1) -- (4, 2) -- (5, 1) -- (4, 0);
        \draw[very thick, -latex] (3, 1) -- (4, 0);
        \draw[very thick] (4, 0) -- (4, 2);
        \draw[very thick] (3, 1) -- (5, 1);
        
        \draw[fill = black] (4, 0) circle (3pt);
        \draw[fill = black] (3, 1) circle (3pt);
        \draw[fill = black] (5, 1) circle (3pt);
        \draw[fill = black] (4, 2) circle (3pt);
    
        \node at (3.3, .3) {\small $p_2$};
        \node at (2.6, 1.1) {\small $u_2$};
        \node at (4.2, -0.3) {\small $v_2$};
    \end{tikzpicture}
    \caption{Graphs $G_1$ (left), $G_2$ (right) with identified vertices $u_1, u_2, v_1, v_2$ and directed edges $p_1, p_2$.}
\end{subfigure}
\hspace{.05\linewidth}
\begin{subfigure}[b]{.55\linewidth}
    \centering
    \begin{tikzpicture}[scale = 0.85]
        \draw[very thick] (1, 0) -- (0, 1) -- (1, 2) -- (0, 3) -- (1, 4);
        \draw[very thick] (1, 4) -- (2, 3);
        \draw[very thick] (2, 3) -- (1, 2) -- (2, 1) -- (1, 0);
        \draw[very thick] (0, 1) -- (2, 1);
        \draw[very thick] (1, 4) -- (2, 5) -- (3, 4) -- (2, 3);
        \draw[very thick] (2, 3) -- (2, 5);
        \draw[very thick] (1, 4) -- (3, 4);
    
        \draw[fill = black] (1, 0) circle (3pt);
        \draw[fill = black] (0, 1) circle (3pt);
        \draw[fill = black] (2, 1) circle (3pt);
        \draw[fill = black] (1, 2) circle (3pt);
        \draw[fill = black] (0, 3) circle (3pt);
        \draw[fill = black] (2, 3) circle (3pt);
        \draw[fill = black] (1, 4) circle (3pt);
        \draw[fill = black] (2, 5) circle (3pt);
        \draw[fill = black] (3, 4) circle (3pt);

        \node at (0.2, 4.2) {\small $u_1 = u_2$};
        \node at (2.8, 2.7) {\small $v_1 = v_2$};
    
        \draw[very thick] (5, 0) -- (4, 1) -- (5, 2) -- (4, 3) -- (5, 4);
        \draw[very thick] (6, 3) -- (5, 2) -- (6, 1) -- (5, 0);
        \draw[very thick] (4, 1) -- (6, 1);
        \draw[very thick] (5, 4) -- (5, 5.4) -- (6.4, 5.4) -- (6.4, 4) -- (6, 3);
        \draw[very thick] (5, 4) -- (6.4, 5.4);
        \draw[very thick] (5, 5.4) -- (6.4, 4);
    
        \draw[fill = black] (5, 0) circle (3pt);
        \draw[fill = black] (4, 1) circle (3pt);
        \draw[fill = black] (6, 1) circle (3pt);
        \draw[fill = black] (5, 2) circle (3pt);
        \draw[fill = black] (4, 3) circle (3pt);
        \draw[fill = black] (6, 3) circle (3pt);
        \draw[fill = black] (5, 4) circle (3pt);
        \draw[fill = black] (5, 5.4) circle (3pt);
        \draw[fill = black] (6.4, 5.4) circle (3pt);
        \draw[fill = black] (6.4, 4) circle (3pt);
    
        \node at (4.2, 4.2) {\small $u_1 = u_2$};
        \node at (6.4, 3.5) {\small $p$};
    \end{tikzpicture}
    \caption{Parallel connection (left) and series connection (right) of $G_1$ and $G_2$ along $p_1$ and $p_2$.}
\end{subfigure}
\caption{Example of Parallel and Series Connections of Graphs}\label{par_series_fig}
\end{figure}

There is a further important operation called the 2-sum, and for graphs it is equivalently realized by deleting $e$ in the parallel connection or by contracting $e$ in the series connection. For matrices $\bar{A}$ and $\bar{B}$, where we assume that $e$ labels the last column of $\bar{A}$ 
and the first column of  $\bar{B}$, 
we can realize the matrix for the 2-sum by deleting the
standard-unit vector column from $ P(\bar{A}, \bar{B})$. 
Parallel connection and series connection (and hence 2-sum) 
preserve total unimodularity. Considering matroids,
2-sum preserves the property of being `identically self dual' (see \cite[Exercise 4, Section 7.1]{oxley-06}). 
2-sum is also a key property for working with the important concept of connectivity in
matroids \cite[Theorem 8.3.1]{oxley-06}: a `2-connected' matroid $N$ is not `3-connected'
if and only if it is the 2-sum of proper minors of $M$. The Fano matroid is a fundamental matroid 
for studying matroids of totally-unimodular matrices. It and its dual characterize, by exclusion as minors, 
binary matroids that are not representable by totally-unimodular matrices. It is an important result
of Seymour that
every binary matroid that does not have the dual of the Fano matroid as a minor can be constructed
by direct sums and 2-sums, starting with matroids that are representable by totally-unimodular matrices and
copies of the Fano matroid (see \cite[Corollary 11.2.5]{oxley-06}). Additionally, Cunningham and Edmonds demonstrated how 2-sum is the
basic operation for constructing all 2-connected matroids, using a matroid decomposition tree, starting from uniform matroids each of which has rank or co-rank 1 (see \cite{ot-14}, for example).

As for linear programming, the operations parallel connection, series connection,  
and 2-sum arise naturally in models
involving time-staged decision making. For example, this happens for situations in which the activities in
 time period $t+1$ depend on the activities in
 time period $t$ through a single shared resource. This can be naturally modeled
 via 2-sum, while parallel connection allows for injecting more of that 
 resource into the system at the start of time period $t+1$, at a (time-dependent) per-unit cost
 that would be captured in the objective function. 
 Series connection can also be relevant 
 for linear programming: in a situation in which 
 there are separate endowments of a resource in time periods $t$ and $t+1$,
 one might want to match the levels of leftover resources in the two
 time periods (to perhaps balance risk). 
All of these situations are special cases of `staircase linear programs',
 which are very challenging for the classical simplex method. 
 In fact, the simplex method has been specially tailored
 to take advantage of the general staircase form (for example, see \cite{amt-85,fo-82,fo-83} and the references therein). 

We are interested in relating the diameters of polyhedra defined through a parallel and series connection of the main constraint matrix to the diameters of the original polyhedra. Let us introduce the formal notation for this discussion. 

Throughout, let 
\begin{equation*} \label{QRForm}
Q := 
\left\{ x \in \R^{n_1} : \bar{A}x = 
\begin{pmatrix}
    c_A \\
    c_a
\end{pmatrix}, 
x \geq 0 \right\}
\text{ and }
R := 
\left\{ x \in \R^{n_2} : \bar{B}x = 
\begin{pmatrix}
    c_b \\
    c_B
\end{pmatrix}, 
x \geq 0 \right\},
\end{equation*}
where $c_A$ and $c_B$ are column vectors in $\mathbb{R}^{(m_1-1)}$ and $\mathbb{R}^{(m_2-1)}$, respectively, and $c_a, c_b \in \mathbb{R}$. 

The polyhedra that we study arise from $Q$ and $R$ through a connection of their main constraint matrices and a corresponding merging of the right-hand side vectors.

\begin{defi}\label{def:PQR}
    The parallel-connection polyhedron of $Q$ and $R$ is the polyhedron 
    \[
        P(\bar{A}, c_{\bar{A}}; \bar{B}, c_{\bar{B}}) := 
        \left\{ 
            x \in \R^{n_1 + n_2 - 1} : P(\bar{A}, \bar{B})x = 
            \begin{pmatrix}
                c_A \\
                c_a + c_b \\
                c_B
            \end{pmatrix},
            x \geq 0
        \right\}.
    \]
\end{defi}

\begin{defi} \label{def:SQR}
    The series-connection polyhedron of $Q$ and $R$  is the polyhedron
    \[
        S(\bar{A}, c_{\bar{A}}; \bar{B}, c_{\bar{B}}) := 
        \left\{ 
            x \in \R^{n_1 + n_2 - 1} : S(\bar{A}, \bar{B})x = 
            \begin{pmatrix}
                c_A \\
                c_a \\
                c_b \\
                c_B
            \end{pmatrix},
            x \geq 0
        \right\}.
    \]
\end{defi}

For convenience, we let $\mathcal{P} := P(\bar{A}, c_{\bar{A}}; \bar{B}, c_{\bar{B}})$ and $\mathcal{S} := S(\bar{A}, c_{\bar{A}}; \bar{B}, c_{\bar{B}})$.
Our goal is to establish diameter bounds for $\mathcal{P}$ and $\mathcal{S}$ in terms of the diameters of the original polyhedra $Q$ and $R$. We will refer to the diameter of a polyhedron $Q$ as $\operatorname{diam}(Q)$. We seek to understand how the diameter of a polyhedron with constraint matrix $P(\bar{A}, \bar{B})$ or $S(\bar{A}, \bar{B})$ is related to the diameters of polyhedra with constraint matrix $\bar{A}$ or $\bar{B}$.
Thus, we are specifically interested in finding diameter bounds that do not depend on the right-hand side. Our notation for a diameter bound for a class of polyhedra with the same constraint matrix and varying right-hand sides follows:


\begin{defi}
Let $A$ be a matrix in $\mathbb{R}^{m \times n}$. Then
\[
    \operatorname{diam}(A) := \max_{b \in \R^m}
   \left \{\operatorname{diam} \{ x \in \R^n : Ax = b, x \geq 0 \} \right\}.
\]
\end{defi}
Informally, we overload  `$\operatorname{diam}()$' to be able to use it for a specific polyhedron or for a matrix, to refer to the class of polyhedra sharing it as their main constraint matrix.

There are some immediate benefits for using $\operatorname{diam}(A)$ as opposed to just the diameter for a polyhedron. First, we may assume that $P$ and $Q$ are simple, as it is known that the maximum diameter (among polyhedra with the same number of facets and dimension) is achieved by a simple polyhedron. Indeed, given any degenerate polyhedron, there exists a right-hand side perturbation that can only increase the diameter; see, e.g., \cite{ykk-84}. Further, when proving a diameter bound for a connection of $P$ and $Q$, we now have the freedom to discuss diameters for polyhedra with the same constraint matrix as $P$ or $Q$ but with a different right-hand side. 


Finally, we always assume standard form and irredundancy of the representation. For an $m \times n$ constraint matrix, this implies that the number of facets is $n$ and the dimension is $n-m$, as opposed to the usual $n-\rank(A)$ when the equality constraints are possibly redundant. Irredundancy is not a restriction for two reasons: first, the diameter of a polyhedron is not affected by its representation in general. Second, the final row of $\bar{A}$ and the first row of $\bar{B}$ are the only ones taking a `special role' in a parallel or series connection; however, note that for the parallel connection, these rows keep linear independence of any independent row subset from the other rows due to the isolated $1$-entry in the final or first column, respectively. For the series connection, it is possible that the rows of $S(\bar{A}, \bar{B})$ are not linearly independent, even if the rows of $\bar{A}$ and $\bar{B}$ are linearly independent. It is possible that both linking constraints can be row-reduced to $(0, 1, 0)$. However, this would imply that the polyhedron is a Cartesian product of two polyhedra with diameters $d(\bar{A})$ and $d(\bar{B})$. Thus, the diameter is bounded by $d(\bar{A}) + d(\bar{B})$, which will still be covered by our bounds. Our proof techniques do not immediately transfer to this case. Thus, we assume that $S(\bar{A}, \bar{B})$ has linearly-independent rows.

%


\subsection{Contributions and Outline}\label{sec:contributions}

Our main results are diameter bounds for all parallel- and series-connection polyhedra. In Section \ref{PCSec}, we discuss parallel connections; in Section \ref{SeriesSec}, we discuss series connections. 

These discussions crucially depend on an understanding of the underlying graphs of the polyhedra $\mathcal{P}$ and $\mathcal{S}$. For each construction, we begin by characterizing and categorizing the vertices in terms of `counts' of basic and nonbasic variables relating to the original polyhedra $Q,R$. Because walking along an edge to an adjacent vertex corresponds to the exchange of a single basic and nonbasic variable (for some basis of the involved vertices), we also obtain a better understanding of the edges of $\mathcal{P}$ and $\mathcal{S}$. The categorization of the vertices and insight into the underlying graph enables us to prove the diameter bounds for the parallel- and series-connection polyhedra. 

In Section \ref{secPCCLass}, we begin with discussing the generality of our results. In particular, we show the parallel connection arises as a special case of the matrix $2$-sum operation (see \cite{s-98}). Then, we categorize the vertices of the parallel-connection polyhedron. Writing a vertex of the parallel connection as the $(n_1+n_2-1)$-vector $(x, s, y)$ where $x$ is a $(n_1-1)$-vector, $s$ is a scalar, and $y$ is a $(n_2-1)$-vector. Any vertex must contain exactly $m_1 + m_2 - 1$ basic variables. Further, simplicity of $Q, R,$ and $\mathcal{P}$ implies $x$ and $y$ must have at least $m_1 - 1$ and $m_2-1$ (nonzero) basic variables, respectively. This gives a degree of freedom as to where to place the final basic variable.  

    


In Section \ref{PCDiaBd}, we exploit the categorization to show that the diameter of the parallel connection of two polyhedra $Q$ and $R$, with diameters bounded above by $\text{diam}(\bar{A})=d(\bar{A})$ and $\text{diam}(\bar{B})=d(\bar{B})$, respectively, is at most $d(\bar{A}) + d(\bar{B}) + 2$. This is stated in Theorem \ref{PCDia}. 

\begin{thm}[Diameter - Parallel Connection] \label{PCDiaRes}
The combinatorial diameter of $\mathcal{P}$ is at most $d(\bar{A}) + d(\bar{B}) + 2$.
\end{thm}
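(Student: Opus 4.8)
The plan is to take two vertices $u$ and $v$ of $\mathcal{P}$ and to build an edge walk between them out of three pieces: a ``cleanup'' move of length at most $1$ at each of $u$ and $v$ (this accounts for the ``$+2$''), a walk of length at most $d(\bar A)$ adjusting the $\bar A$-coordinates, and a walk of length at most $d(\bar B)$ adjusting the $\bar B$-coordinates. The workhorse is the following slice structure, building on the vertex categorization of Section~\ref{secPCCLass}. Write a vertex as $(x,s,y)$ and put $\beta:=c_a+c_b$. If $y^*$ is the $y$-part of a vertex of $\mathcal{P}$ in which the linking variable $s$ is basic (call these \emph{type-$s$} vertices), then $y^*$ is a nondegenerate vertex of $R_B:=\{y:By=c_B,\ y\ge 0\}$, and the face of $\mathcal{P}$ obtained by freezing these $y$-coordinates is, in the coordinates $(x,s)$, exactly $\{(x,s):Ax=c_A,\ ax+s=\beta-by^*,\ x,s\ge 0\}$, i.e.\ a polyhedron with constraint matrix $\bar A$ and right-hand side $(c_A,\ \beta-by^*)$; symmetrically, freezing the $x$-coordinates at a vertex $x^*$ of $Q_A:=\{x:Ax=c_A,\ x\ge 0\}$ yields a face that is a polyhedron with constraint matrix $\bar B$. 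Hence a walk inside such a face — which is automatically a walk in $\mathcal{P}$ — costs at most $d(\bar A)$, respectively $d(\bar B)$.

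First I would show every vertex of $\mathcal{P}$ is within a single edge of a type-$s$ vertex. If $u$ is not of type $s$ then $s=0$ at $u$, and by the categorization its ``free'' basic variable lies among the $x$- or the $y$-coordinates; say the former, so the $x$-part $\tilde x$ lies in the relative interior of an edge $[x_1,x_2]$ of $Q_A$ while the $y$-part is a vertex of $R_B$. Simplicity of $\mathcal{P}$ forces $a$ to be non-constant on $[x_1,x_2]$ (otherwise $(\tilde x,0,y)$ would be interior to a segment of $\mathcal{P}$), so moving $\tilde x$ along this edge toward the endpoint with smaller $ax$-value raises $s$ off $0$ and reaches a type-$s$ vertex in one edge (the case of a free $y$-variable is symmetric in $b$). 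Applying this at $u$ and at $v$ costs at most $2$ edges, so from now on both are of type $s$: $u=(x^1,s^1,y^1)$ and $v=(x^2,s^2,y^2)$, with each $x^i$ a vertex of $Q_A$, each $y^i$ a vertex of $R_B$, and $s^i=\beta-ax^i-by^i>0$.

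Since $ax^1+by^1<\beta$ and $ax^2+by^2<\beta$, we get $ax^1+ax^2+by^1+by^2<2\beta$, so at least one of $ax^2+by^1<\beta$ or $ax^1+by^2<\beta$ holds strictly. In the first case, walk inside the face $\{y=y^1\}$ (a $\bar A$-polyhedron containing both $u$ and $w:=(x^2,\beta-ax^2-by^1,y^1)$ as vertices) from $u$ to $w$ in at most $d(\bar A)$ steps, then inside the face $\{x=x^2\}$ (a $\bar B$-polyhedron containing $w$ and $v$ as vertices) from $w$ to $v$ in at most $d(\bar B)$ steps; in the second case do the symmetric thing, adjusting the $\bar B$-coordinates first. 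Either way this middle portion has length at most $d(\bar A)+d(\bar B)$, so the total walk has length at most $d(\bar A)+d(\bar B)+2$.

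The part requiring the most care is everything tucked behind ``simplicity of $\mathcal{P}$'' and behind the slice lemma. One must justify that a right-hand-side perturbation can make $\mathcal{P}$ simple while still presenting it as a parallel-connection polyhedron (splitting the perturbed linking entry as a sum), and then verify carefully that the frozen-coordinate faces genuinely are the claimed $\bar A$- and $\bar B$-polyhedra with the right combinatorics. If instead one argues directly on a possibly degenerate $\mathcal{P}$, this forces a careful treatment of non-revisiting edge walks inside the (degenerate) faces, and in the borderline configurations where one of the inequalities above is tight — so that the intended intermediate vertex is not a vertex of the relevant face — an edge walk that detours through the facet $\{s=0\}$, on which the $y$- or $x$-coordinate is still a vertex of $R_B$ or $Q_A$, all while keeping the detour within the $d(\bar A)+d(\bar B)$ budget rather than adding to it. This degeneracy bookkeeping, not the three-phase skeleton, is the technical heart of the argument.
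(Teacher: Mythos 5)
Your proposal is correct and takes essentially the same approach as the paper's: your frozen-coordinate slices are the faces of Lemma~\ref{lift_lem}, your middle segment between two vertices with $s$ basic is exactly Lemma~\ref{nbnb1}/Corollary~\ref{basicbasic} (one of $ax^2+by^1\le c_a+c_b$ or $ax^1+by^2\le c_a+c_b$ always holds, giving $d(\bar{A})+d(\bar{B})$), and the one-edge move at each endpoint to make $s$ basic is the paper's final step in Theorem~\ref{PCDia}. The only caveat, which the paper shares by asserting that step without justification, is that your cleanup move implicitly assumes the $s$-increasing edge terminates at a vertex (i.e., the relevant $1$-dimensional face is bounded in the direction of decreasing $ax$ or $by$) rather than being an unbounded ray.
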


Our techniques involve identifying walks in $Q$ and $R$ and showing that copies of these walks exist in $\mathcal{P}$. We say a walk in $Q$, denoted $(x^1, s^1), (x^2, s^2), \ldots, (x^k, s^k)$, {\em lifts} to $\mathcal{P}$ if there exists $y$ such that $(x^1, s^1, y), (x^2, s^2, y), \ldots, (x^k, s^k, y) \in \mathcal{P}$ are vertices and $(x^i, s^i, y)$ is adjacent to $(x^{i+1}, s^{i+1}, y)$ for $1 \leq i < k.$ 

Throughout, we are interested in connecting our results to the Hirsch conjecture. Thus, in a corollary we assume that the diameters of $Q$ and $R$ are at most the Hirsch bound. When we make this assumption we say {\em $Q$ and $R$ satisfy Hirsch}, or simply {\em $Q$ and $R$ are Hirsch}. If $Q$, and $R$ satisfy Hirsch for all right-hand sides, then $\text{diam}(Q)\leq m_1$ and $\text{diam}(R)\leq m_2$, where $m_1, m_2$ are the number of rows in the main constraint matrices. The diameter of $\mathcal{P}$ is at most the claimed Hirsch bound ($m_1 + m_2 - 1$) plus 3, by Theorem \ref{PCDia}.



In Section \ref{SeriesSec}, we study the series-connection polyhedron. The discussion requires additional technical tools and, as we will explain, leads to two different types of bounds. In particular, to devise general diameter bounds, we require that the polyhedra $Q, R$ allow for so-called {\em non-revisiting} walks that adhere to the specified bounds. We begin by recalling the intimate connection of non-revisiting walks, the non-revisiting conjecture, and the Hirsch conjecture. In Section \ref{sec:classseries}, we present a categorization of the vertices of series-connection polyhedra. This categorization is similar to the categorization we used for the parallel-connection polyhedron. We also discuss the generality of our results; in particular, we demonstrate that the series connection of any pair of matrices may be reduced to the form $S(\bar{A}, \bar{B})$.


Again, the categorization of the vertices of $\mathcal{S}$ is one of the main tools we use to prove our stated bounds for the diameter of the series-connection polyhedron. For the first bound (Theorem \ref{thm:series2}), we assume that $Q, R$ are integral polyhedra, i.e., have integer coordinates at all vertices. This is not a restriction for rational input. 

\begin{thm}[Diameter - Series Connection]\label{thm:series2}
Let $Q$ and $R$ be integral polyhedra that satisfy the non-revisiting conjecture. Let $\mathcal{S}$ be the series-connection polyhedron for $Q$ and $R$ and $s_{\min} := \min \{s : (x, s, y) \in \mathcal{S}\}$.
If $(x^1, s^1, y^1), (x^2, s^2, y^2) \in \mathcal{S}$ are vertices, then the distance between $(x^1, s^1, y^1)$ and $(x^2, s^2, y^2)$ is at most $m_1 + m_2 + s^1 + s^2 - 2s_{\min}.$ If $\sup \{s : (x, s, y) \in \mathcal{S}\} = s_{\max} < \infty,$ then the diameter of $\mathcal{S}$ is bounded by $m_1 + m_2 + s_{\text{diff}}$, where $s_{\rm{diff}} = s_{\max} - s_{\min}$.
\end{thm}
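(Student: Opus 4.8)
The plan is to route any two vertices of $\mathcal S$ through a single horizontal slice $\{s=\text{const}\}$, exploiting the vertex categorization of Section~\ref{sec:classseries} together with the hypothesis that $Q$ and $R$ admit non-revisiting walks. First I would collect the structural facts. For a feasible value $\sigma$ of the linking coordinate put $\mathcal S_\sigma:=\{(x,\sigma,y)\in\mathcal S\}$; fixing $s=\sigma$ decouples the two linking rows, so $\mathcal S_\sigma$ is (affinely) the product $Q_\sigma\times R_\sigma$, where $Q_\sigma=\{x:Ax=c_A,\ ax=c_a-\sigma,\ x\ge 0\}$ and $R_\sigma=\{y:By=c_B,\ by=c_b-\sigma,\ y\ge 0\}$, and each factor is affinely isomorphic to the face ``linking variable $=0$'' of a polyhedron with constraint matrix $\bar A$ (resp.\ $\bar B$). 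Since a non-revisiting walk between two vertices lying on a common facet must stay on that facet, the non-revisiting hypothesis passes to these faces, so $\operatorname{diam}(Q_\sigma)\le d(\bar A)\le m_1$, $\operatorname{diam}(R_\sigma)\le d(\bar B)\le m_2$, and $\operatorname{diam}(\mathcal S_\sigma)\le m_1+m_2$ (edge steps inside a $\mathcal S_\sigma$ taken coordinatewise are edge steps of $\mathcal S$). Finally, by the categorization every vertex of $\mathcal S$ projects to a vertex of $Q$ or to a vertex of $R$; since $Q,R$ are integral, the linking coordinate is then an integer at every vertex of $\mathcal S$, in particular $s^1,s^2,s_{\min},s_{\max}\in\mathbb Z$.

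The heart of the argument is a Descent Lemma: from any vertex $v=(x,\sigma,y)$ of $\mathcal S$ with $\sigma>\sigma'\ge s_{\min}$ there is an edge walk in $\mathcal S$ from $v$ that ends in $\mathcal S_{\sigma'}$, uses at most $\sigma-\sigma'$ steps that strictly decrease the linking coordinate (each by at least $1$, by integrality), and whose remaining steps are ``horizontal'', projecting to edge steps of $Q$ or of $R$. I would construct it by alternating two moves: (i) whenever the categorization supplies a basis exchange that decreases the linking coordinate at the current vertex, perform it; (ii) when no such exchange is available at the current vertex, reposition by following non-revisiting walks inside $Q_\tau$ and $R_\tau$ (for the current level $\tau$) until a decreasing exchange reappears, handling the final step so as to land exactly on $\mathcal S_{\sigma'}$. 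The point of assuming non-revisiting rather than just Hirsch is exactly step~(ii): a non-revisiting walk in $Q$ or $R$ never re-enters a linking facet it has left, which lets the repositionings over \emph{all} plateaus of the descent be charged against a single pass through the budget $d(\bar A)+d(\bar B)\le m_1+m_2$ instead of one pass per plateau. This is also where the ``detour'' of the abstract appears: $\mathcal S$ itself need not be non-revisiting, so the horizontal pieces are routed through the facets of $\mathcal S$ carrying $Q_\tau$ or $R_\tau$, where non-revisiting is inherited, and then resumed.

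Granting the Descent Lemma, the distance bound is immediate: descend $v^1$ and $v^2$ to the bottom slice $\mathcal S_{s_{\min}}$ and join the two arrivals inside it; the two descents and the bottom-slice traversal together spend at most $(s^1-s_{\min})+(s^2-s_{\min})$ linking-decreasing steps and one combined horizontal budget $d(\bar A)+d(\bar B)\le m_1+m_2$, giving $\operatorname{dist}(v^1,v^2)\le m_1+m_2+s^1+s^2-2s_{\min}$. For the strictly sharper diameter bound I would not push both vertices to the bottom: assuming $s^1\le s^2$, apply the Descent Lemma to $v^2$ only, with target level $\sigma'=s^1$ (at most $s^2-s^1\le s_{\mathrm{diff}}$ linking-decreasing steps, ending exactly in $\mathcal S_{s^1}$), and then join the arrival to $v^1$ inside $\mathcal S_{s^1}\cong Q_{s^1}\times R_{s^1}$ in at most $m_1+m_2$ steps; this yields $\operatorname{diam}(\mathcal S)\le m_1+m_2+s_{\mathrm{diff}}$ whenever $s_{\max}<\infty$.

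I expect the Descent Lemma, and within it the bookkeeping that collapses the horizontal cost of all plateaus into a single $m_1+m_2$, to be the main obstacle: one must concatenate the non-revisiting repositioning walks of consecutive plateaus without ever re-entering a linking facet, in the presence of degeneracy and without $\mathcal S$ itself being non-revisiting, and must verify that each plateau really does terminate at a vertex admitting a linking-decreasing step and that the descent can be made to land exactly on the prescribed level. The face/product identifications of the first paragraph and the integrality of the linking coordinate are routine by comparison, and the diameter refinement is a one-line consequence once the descent is known to cost one unit per unit drop of the linking coordinate.
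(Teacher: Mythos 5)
There is a genuine gap, and it sits exactly where your argument deviates from what the geometry allows: you treat every slice $\mathcal{S}_\sigma=\{(x,\sigma,y)\in\mathcal{S}\}$ as if its edges were edges of $\mathcal{S}$ (``edge steps inside a $\mathcal{S}_\sigma$ taken coordinatewise are edge steps of $\mathcal{S}$''). That is only true when $\sigma$ is an \emph{extreme} value of $s$ over $\mathcal{S}$ (or $\sigma=0$), because only then is the slice a face of $\mathcal{S}$; for an intermediate level the slice is a cross-section whose vertices and edges (e.g.\ points where an edge of $\mathcal{S}$ merely crosses the level $\sigma$) are in general not vertices or edges of $\mathcal{S}$ at all. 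This invalidates the step that carries your sharper bound: descending $v^2$ to the level $s^1$ and joining to $v^1$ ``inside $\mathcal{S}_{s^1}\cong Q_{s^1}\times R_{s^1}$'' is not an edge walk of $\mathcal{S}$ when $s^1$ is not extremal, and the prescribed landing ``exactly on $\mathcal{S}_{\sigma'}$'' in your Descent Lemma has the same defect. The paper's proof joins the two descents only on the face where $s$ is minimal (or maximal), observes that this face has the $1$-sum constraint matrix built from $\begin{bmatrix} A\\ a\end{bmatrix}$ and $\begin{bmatrix} b\\ B\end{bmatrix}$, and bounds its diameter by $m_1+m_2$ via non-revisiting for all right-hand sides (Lemma~\ref{degeneracy_lem}); to get $m_1+m_2+s_{\rm diff}$ it does not slice at $s^1$ but notes (Lemma~\ref{walklem}) that each vertex is within $s^i-s_{\min}$ of the min-face and within $s_{\max}-s^i$ of the max-face, so the cheaper of ``send both to the min-face'' and ``send both to the max-face'' costs at most $s_{\rm diff}$ combined, since the two options sum to $2s_{\rm diff}$.

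The second problem is your Descent Lemma itself: the claim that all horizontal ``plateau'' repositionings across the whole descent can be charged to a single $d(\bar A)+d(\bar B)$ budget is left unproven (you flag it as the main obstacle), and in the form stated it is also unnecessary. Since one may assume $\mathcal{S}$ is simple (the $\operatorname{diam}(\cdot)$-over-all-right-hand-sides convention permits perturbation), the simplex method with objective $\min s$ (resp.\ $\max s$) started at any vertex takes only strictly improving steps, and integrality of the vertices of $\mathcal{S}$ (which, as you note, follows from the categorization in Proposition~\ref{seriesVerClass}) forces each step to change $s$ by at least $1$; so the descent to the extreme face costs at most $s^i-s_{\min}$ (resp.\ $s_{\max}-s^i$) steps with no horizontal steps at all. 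Replacing your plateau bookkeeping by this monotone simplex walk, and joining only on an extreme-$s$ face, repairs both bounds and is precisely the paper's argument; your first bound $m_1+m_2+s^1+s^2-2s_{\min}$ does go through once the descent is done this way, but the $m_1+m_2+s_{\rm diff}$ bound cannot be obtained by slicing at the non-extremal level $s^1$.
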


Unlike for the parallel-connection polyhedron, where we are able to prove a constant, additive bound, the sum of the original bounds here may be exceeded by the term $s_{\rm{diff}}$, which represents the range of possible values for the shared variable $s$. This makes it a different type of bound that has to represent $s_{\rm{diff}}$ in the input to claim linearity. The excess is caused by walking the shared variable $s$ to an extreme value, which allows a transfer of non-revisiting walks from the original polyhedra $Q,R$ to (parts of) the walk in $\mathcal{S}$. If $Q$ and $R$ are Hirsch, we can see that the diameter exceeds Hirsch by at most $s_{\rm{diff}}.$


To provide a bound which does not depend on $s_{\rm{diff}}$, we devise a new approach to construct walks between vertices of $\mathcal{S}$ in Section \ref{sec:series_bound}. We introduce a new notion of diameter, the $s$-bounded diameter (Definition \ref{def:boundeddiam}, stylized $d_b(\cdot)$), that characterizes the maximum distance between two vertices when we do not allow the shared variable $s$ to drop below a specified value. The $s$-bounded diameter is a weaker assumption than the use of the monotone diameter \cite{todd2002many}, where (strict) improvement with respect to a given objective function is required. We are able to show that the diameter is at most quadratic in the diameters and the $s$-bounded diameters of the original polyhedra, $Q$ and $R$. The work culminates in the following theorem:

\begin{thm} 
The diameter of the series-connection polyhedron, $\mathcal{S},$ where $s_{\min}^R \leq s_{\min}^Q$ is
at most the maximum of 
\[
    \{d(\bar{A})(d_b(\bar{B})+1) + d(B) + 2d(\bar{B}), \,
    (d(\bar{A})-1)(d_b(\bar{B})+1) + d(\bar{A}) + 3d(\bar{B}) + 2 \}.
\]
\end{thm}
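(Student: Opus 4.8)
Throughout I use the vertex categorization of $\mathcal{S}$ from Section~\ref{sec:classseries} together with the earlier reductions: $Q$, $R$, and $\mathcal{S}$ may be taken simple and in standard form, and the theorem hands us $s_{\min}^R \le s_{\min}^Q$, so $s_{\min} = s_{\min}^Q$. The structural fact I lean on is a dichotomy for edges of $\mathcal{S}$: an edge along which $s$ is constant, say $s \equiv \sigma$, lies in the product slice $Q_\sigma \times R_\sigma$ with $Q_\sigma = \{x : Ax = c_A,\ ax = c_a - \sigma,\ x \ge 0\}$ and $R_\sigma = \{y : By = c_B,\ by = c_b - \sigma,\ y \ge 0\}$, so it moves only the $Q$-part or only the $R$-part; an edge along which $s$ varies moves the $Q$-part and the $R$-part in lock-step, tracing a sub-segment of an edge of $Q$ (in $(x,s)$-space) and a sub-segment of an edge of $R$ (in $(s,y)$-space) synchronized through the common value of $s$. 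Given vertices $v^1 = (x^1,s^1,y^1)$ and $v^2 = (x^2,s^2,y^2)$, the plan is to first transport the $Q$-part of $v^1$ to that of $v^2$ along a shortest walk of $Q$, lifted into $\mathcal{S}$, and then transport the $R$-part to $y^2$ inside a single slice.

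\textbf{The main construction.} First I would fix a shortest walk $W$ in $Q$ between the $Q$-projections of $v^1$ and $v^2$, so $|W| \le d(\bar A)$, and process $W$ one edge at a time, maintaining a vertex of $\mathcal{S}$ whose $Q$-part is the current vertex of $W$. The obstruction to lifting an edge $e$ of $W$ to a single edge of $\mathcal{S}$ is that, as $s$ sweeps the interval prescribed by $e$, the $R$-part currently carried along is forced through a vertex of $R$ before $s$ reaches the far endpoint of $e$. To clear it I insert a corrective walk that restores the $Q$-part but repositions the $R$-part to a vertex from which the $s$-monotone edge of $R$ spans all of $e$'s interval; projected to $R$ this is a walk whose $s$-coordinate may not drop below the value dictated by $Q$, hence of length at most $d_b(\bar B)$ rather than $d(\bar B)$ (here Definition~\ref{def:boundeddiam}). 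This is precisely where $s_{\min}^R \le s_{\min}^Q$ enters: $R$ never binds how far down $s$ may be pushed, so the detours stay feasible. Summing over the at most $d(\bar A)$ edges of $W$ gives a walk of length at most $d(\bar A)\,(d_b(\bar B)+1)$ reaching a vertex of $\mathcal{S}$ with the right $Q$-part; I then join its $R$-part to $y^2$ inside the slice $R_{s^2}$, bounding that slice's diameter in terms of $d(B)$ and $d(\bar B)$, and finally absorb into additive terms the transitions between the degenerate vertex types of $\mathcal{S}$ and one entry/exit detour through the facet $\{s = s_{\min}\}$ — which, unlike $\mathcal{S}$ itself, can be made to obey the non-revisiting bound since it is a product of slices of $Q$ and $R$. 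A short case analysis — according to whether $v^1$ or $v^2$ already lies on that facet (equivalently whether $W$ can be taken an edge shorter) and to which of the quantities attached to $Q$ and to $R$ dominates — produces the two expressions, and the stated maximum covers both.

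\textbf{The main obstacle.} The hard part will be the corrective step: showing that a short $s$-bounded walk in $R$ always exists that makes the next edge of $W$ liftable, and that the resulting sequence of points of $\mathcal{S}$ are genuinely vertices with consecutive ones genuinely adjacent. This is delicate because $\mathcal{S}$ is typically degenerate — a vertex has several bases, and its $Q$- and $R$-parts may sit on an edge rather than at a vertex — so the non-revisiting walks furnished by the hypotheses and by the definition of $d_b$ must be tracked carefully through that degeneracy, and because $\mathcal{S}$ need not satisfy the non-revisiting conjecture even when $Q$ and $R$ do (the same phenomenon that forces Theorem~\ref{thm:series2} to pay $s_{\mathrm{diff}}$). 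My resolution is to route the corrective walks through the facet $\{s = s_{\min}\}$, where non-revisiting is inherited from the factors, accepting the additive constants in the statement. Once that lemma is in hand, the walk in $Q$, the final walk in the slice $R_{s^2}$, and the type bookkeeping are routine, and assembling the pieces yields the bound.
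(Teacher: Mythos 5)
Your overall architecture agrees with the paper's in its first case: take a shortest walk in $Q$ between the $Q$-parts (at most $d(\bar A)$ edges), repair each failed lift with an $s$-bounded walk in $R$ costing at most $d_b(\bar B)+1$ (this is exactly Lemma~\ref{correction_lemma}), finish with a walk of the $R$-part at fixed $s$ costing $d(B)$ (via Lemma~\ref{degeneracy_lem}), and pay additive terms for endpoints in the wrong vertex category. However, there is a genuine gap: the correction mechanism only works when the next vertex $(x^{i+1},s^{i+1})$ of the $Q$-walk lifts at all, i.e.\ when $s^{i+1}\le s_{\max}^R$; Lemma~\ref{correction_lemma} explicitly presupposes that $(x',s',y')\in\mathcal{S}$ exists for some $y'$. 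If an intermediate vertex of the shortest $Q$-walk has $s^{i+1}>s_{\max}^R$, then no feasible $y$ exists and no corrective walk in $R$, $s$-bounded or otherwise, can make that step liftable. The paper's proof splits precisely on this obstruction: when all $s^i\le s_{\max}^R$ one gets the first expression, and when some $s^i>s_{\max}^R$ one lifts from both ends until blocked, landing both partial walks on the face $s=s_{\max}^R$, and connects the two blocked points within $d(\bar A)+d(\bar B)$ steps using the Cartesian-product argument from the proof of Theorem~\ref{SQR_diam}; that meet-in-the-middle is exactly where the second expression $(d(\bar A)-1)(d_b(\bar B)+1)+d(\bar A)+d(\bar B)+2$ (before adding $2d(\bar B)$) comes from. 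Your proposed dichotomy --- whether $v^1$ or $v^2$ lies on $\{s=s_{\min}\}$, or which factor ``dominates'' --- does not address this obstruction and cannot reproduce that term, so the second member of the maximum is unaccounted for in your argument.

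Two further points of substance. First, your reading of the hypothesis $s_{\min}^R\le s_{\min}^Q$ is not the role it plays: it is not about keeping the corrective detours feasible, but about the crossover step (Lemma~\ref{crossover_lemma}). An endpoint with basis split $(m_1,1,m_2-1)$ has a $Q$-part that is not a vertex of $Q$, so your main construction cannot even begin there; the hypothesis guarantees that walking the $R$-part toward $s_{\min}^R<s_{\min}^Q$ (or past $s_{\max}^Q$) forces a coordinate of $(x,s)$ out of the basis within $d(\bar B)$ steps, which is the source of the $+2d(\bar B)$. Second, routing the corrective walks through the facet $\{s=s_{\min}\}$, as you suggest, is neither what the correction requires (the paper's corrective walk is blocked exactly at the value $s'$ dictated by the $Q$-edge, which is what keeps the cost at $d_b(\bar B)+1$ per edge) nor compatible with the stated bound, since a round trip to $s_{\min}$ per edge is not controlled by $d_b(\bar B)+1$. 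Without the $s_{\max}^R$ case analysis and the corrected use of the hypothesis, the proof as outlined does not establish the theorem.
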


A similar bound holds when $s_{\min}^R \geq s_{\min}^Q$ by interchanging the roles of $Q$ and $R$. Again, we can view these bounds in terms of the Hirsch conjecture. In this case, the bound simplifies slightly because the diameter of a face cannot exceed the diameter of the polyhedron when Hirsch is satisfied for all right-hand sides, and thus we replace $d(B)$ with $d(\bar{B}).$


In Section \ref{sec:conclusion}, we provide a brief conclusion and give an outlook on some natural directions for future research. 

\setcounter{thm}{0}
\setcounter{cor}{0}


\section{Parallel Connection} \label{PCSec}

In this section, we prove that the diameter of the parallel-connection polyhedron of two polyhedra $Q,R,$ with diameters bounded by $d(\bar{A})$, $d(\bar{B})$, respectively, is bounded above by $d(\bar{A})+d(\bar{B})+2$. Consequently, we establish that the diameter of a parallel-connection polyhedron of two Hirsch-satisfying polyhedra does not exceed Hirsch by more than $3$. 

Throughout, we refer to points of $\mathcal{P}$ as triples $(x, s, y)$ where $x \in \R^{n_1 - 1}, s \in \R, y \in \R^{n_2 - 1}$. This allows us to write  
    \[
        \mathcal{P} = 
        \left\{ 
            \begin{pmatrix}
                x \\
                s \\
                y
            \end{pmatrix} \in \R^{n_1 + n_2 - 1} : 
        \begin{bmatrix}
            A & 0 & 0 \\
            a & 1 & b \\
            0 & 0 & B
        \end{bmatrix} \begin{pmatrix}
                x \\
                s \\
                y
            \end{pmatrix} =
            \begin{pmatrix}
                c_A \\
                c_a + c_b \\
                c_B
            \end{pmatrix},
            x, s, y \geq 0
        \right\}.
    \]
The scalar $s$ is associated with the `linking' column or variable. For this reason, we refer to $s$ as the \textit{shared variable.} Additionally, we recall that $\bar{A}$ and $\bar{B}$ have full rank and $Q$ and $R$ are simple. 

Our results for the parallel connection can be viewed as a special case of the $2$-sum. The $2$-sum \cite{s-98} of two general matrices is defined as:
\[
    \begin{bmatrix}
        A & a 
    \end{bmatrix}
    \oplus_2
    \begin{bmatrix}
        b \\
        B
    \end{bmatrix}
    =
    \begin{bmatrix}
        A & ab \\
        0 & B
    \end{bmatrix}.
\]
In fact, by the definition of parallel connection, we have $P(\bar{A}, \bar{B}) = \bar{A} \oplus_2 \bar{B}$. Furthermore, if $M$ is any matrix with $M_{m,n} \neq 0$, then pivoting on the shared column $M \oplus_2 \bar{B}$ yields a parallel connection of two matrices. Thus, if a polyhedron can be represented in standard form with main constraint matrix $M \oplus_2 \bar{B}$, then our results for parallel connection hold for this special case of the $2$-sum. However, given two general matrices $M_1$ and $M_2$, $M_1 \oplus_2 M_2$ may not be able to be reduced to the form $P(\bar{A}, \bar{B})$ for some $\bar{A}, \bar{B}$. So, our results do not generalize completely to the $2$-sum. 



\subsection{Categorization of Vertices} \label{secPCCLass}
Our first step is to establish an understanding of the graph of $\mathcal{P}$; see Definitions \ref{def:P} and \ref{def:PQR}. Toward this end, we categorize the vertices of $\mathcal{P}$. 

We begin with an observation on the number of basic variables in the blocks $x$ and $y$.
If $(x, s, y) \in \mathcal{P}$ is a vertex and $\mathcal{P}$ is simple, then the vertices of $\mathcal{P}$ are in one-to-one correspondence with the basic feasible solutions of $\mathcal{P}$. It follows that $x$ has at least $m_1 - 1$ (nonzero) basic variables and $y$ has at least $m_2 - 1$ (nonzero) basic variables, because $m_1-1$ and $m_2-1$ basic variables are needed to satisfy $Ax = c_A$ and $By = c_B$, respectively, by simplicity of $Q$ and $R$. Because $\mathcal{P}$ is simple, a basic feasible solution has exactly $m_1 + m_2 - 1$ (nonzero) basic variables, and so there is a single extra basic variable to `place'. This allows us to partition the vertices into $3$ categories.

\begin{prop}[Categorization of Vertices] \label{PCClass}
If $\mathcal{P}$ is simple, then the vertices of $\mathcal{P}$ can be partitioned into $3$ categories based on a split of basic variables of the corresponding basic feasible solutions. 
\begin{enumerate}
    \item The first category is $(m_1, 0, m_2 -1)$ indicating those vertices $(x, s, y)$ where $x$ has $m_1$ basic variables, $s$ is nonbasic, and $y$ has $m_2 -1$ basic variables. 
    
    \item The second category is $(m_1-1, 1, m_2 -1)$ indicating those vertices $(x, s, y)$ where $x$ has $m_1-1$ basic variables, $s$ is basic, and $y$ has $m_2 -1$ basic variables.

    \item The third category is $(m_1-1, 0, m_2)$ indicating those vertices $(x, s, y)$ where $x$ has $m_1-1$ basic variables, $s$ is nonbasic, and $y$ has $m_2$ basic variables.
\end{enumerate}
\end{prop}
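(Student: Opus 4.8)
The plan is to use the standing assumptions on $\mathcal P$ — standard form, irredundancy, full row rank of $P(\bar A,\bar B)$, and simplicity — to identify each vertex with a unique nondegenerate basis, and then to read off the admissible splits of such a basis directly from the block-triangular shape of $P(\bar A,\bar B)$. Since $\mathcal P$ is simple and is cut out by $m_1+m_2-1$ linearly independent equalities, each vertex $(x,s,y)$ has exactly $m_1+m_2-1$ positive coordinates, and the columns of $P(\bar A,\bar B)$ indexed by this support are linearly independent; call that index set $\mathcal B$ (a basis). Let $\mathcal B_x$, $\mathcal B_s$, $\mathcal B_y$ denote the basic $x$-columns, the (at most one) basic shared column, and the basic $y$-columns, so $|\mathcal B_x| + |\mathcal B_s| + |\mathcal B_y| = m_1+m_2-1$ with $|\mathcal B_s|\in\{0,1\}$. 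The three categories in the statement are precisely the admissible values of the triple $(|\mathcal B_x|,\,|\mathcal B_s|,\,|\mathcal B_y|)$, so it suffices to pin those down.

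The key step is the pair of lower bounds $|\mathcal B_x|\ge m_1-1$ and $|\mathcal B_y|\ge m_2-1$. For the first: among the top $m_1-1$ rows of $P(\bar A,\bar B)$ (the rows of $A$), only the $x$-columns have nonzero entries. Restricting the invertible basis submatrix to those $m_1-1$ rows therefore produces $m_1-1$ linearly independent vectors supported entirely on the $\mathcal B_x$-coordinates; hence $|\mathcal B_x|\ge m_1-1$, and in fact $A$ restricted to the columns $\mathcal B_x$ has full row rank $m_1-1$ (this is where $\rank A = m_1-1$, equivalently full row rank of $\bar A$, is used). The bound $|\mathcal B_y|\ge m_2-1$ follows symmetrically from the bottom $m_2-1$ rows (the rows of $B$), in which only the $y$-columns are nonzero.

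Everything after that is bookkeeping. If $s$ is basic, then $|\mathcal B_x|+|\mathcal B_y| = m_1+m_2-2$, and combined with the two bounds this forces $|\mathcal B_x| = m_1-1$ and $|\mathcal B_y| = m_2-1$ — category 2. If $s$ is nonbasic, then $|\mathcal B_x|+|\mathcal B_y| = m_1+m_2-1$; since $|\mathcal B_x|\ge m_1-1$ and $|\mathcal B_y|\ge m_2-1$, the only options are $(|\mathcal B_x|,|\mathcal B_y|) = (m_1, m_2-1)$ — category 1 — or $(m_1-1, m_2)$ — category 3 (the option $|\mathcal B_x|\ge m_1+1$ is impossible, as it would force $|\mathcal B_y|\le m_2-2$). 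The triples $(m_1,0,m_2-1)$, $(m_1-1,1,m_2-1)$, $(m_1-1,0,m_2)$ are pairwise distinct, so every vertex lies in exactly one of the three categories, as claimed.

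The only genuine piece of content is the elementary lemma underpinning the two bounds: if $k$ rows of an invertible matrix vanish outside a column set $T$, then $|T|\ge k$ (restricting those rows to the columns of $T$ preserves their rank, which is $k$). I expect this — together with the bookkeeping of which hypotheses ($\rank A = m_1-1$, $\rank B = m_2-1$, simplicity of $\mathcal P$) are invoked where — to be the only place where care is required; the rest is arithmetic. I would also make explicit, once at the outset, why simplicity of $\mathcal P$ yields the correspondence ``vertex $\leftrightarrow$ feasible basis with support of size exactly $m_1+m_2-1$'', invoking the standard-form and irredundancy conventions fixed earlier in the paper.
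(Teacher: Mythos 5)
Your proposal is correct and takes essentially the same approach as the paper: simplicity of $\mathcal{P}$ (with irredundancy and full row rank) gives exactly $m_1+m_2-1$ basic variables at each vertex, the $x$- and $y$-blocks must carry at least $m_1-1$ and $m_2-1$ of them, and placing the single remaining basic variable yields exactly the three splits. The only difference is in how the two lower bounds are justified --- you use a rank argument on the invertible basis submatrix (rows of $A$ and $B$ vanish outside their own blocks), while the paper appeals to simplicity of $Q$ and $R$; your version is, if anything, the more self-contained of the two.
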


We call the category a vertex belongs to its {\em basis split}.
The categorization into different basis splits will be a crucial tool for obtaining a diameter bound for the parallel-connection polyhedron. We are able to establish a diameter bound by finding a bound for the distance of vertices from each pair of categories.


\subsection{Diameter Bound} \label{PCDiaBd}

Next, we prove that the diameter of $\mathcal{P}$ is at most $d(\bar{A}) + d(\bar{B}) + 2$. Our strategy is to consider walks in the original polyhedra $Q, R$ (but for possibly different right-hand sides) and show that we can lift and concatenate those walks to walks in $\mathcal{P}$ between corresponding vertices. We first prove two technical lemmas and a corollary which enable us to prove the main result (Theorem \ref{PCDia}). We will make use of two sets of parameterized polyhedra, $Q(t)$ and $R(t)$:
\[
    Q(t) = \left\{ 
        x \in \R^{n_1} : \bar{A}x = \begin{pmatrix}
            c_A \\
            t
        \end{pmatrix}, x \geq 0
    \right\}, \quad
    R(t) = \left\{ 
        x \in \R^{n_2} : \bar{B}x = \begin{pmatrix}
            t \\
            c_B
        \end{pmatrix}, x \geq 0
    \right\}
\]
for all $t \in \R.$ The diameters of $Q(t)$ and $R(t)$ are bounded above by $\operatorname{diam}(\bar{A})$ and $\operatorname{diam}(\bar{B})$ because they have $\bar{A}$ and $\bar{B}$ as equality constraint matrices, respectively. We begin with a lemma that demonstrates how to lift a walk in $Q(t)$, for a certain $t$, to $\mathcal{P}$.


\begin{lem}\label{lift_lem}
Let $\mathcal{P}$ be simple. Let $(x, s, y) \in \mathcal{P}$ be a vertex and set $t = c_a + c_b - by.$ If $y$ has $m_2-1$ basic variables, then $Q(t) \times \{ y \}$ is a face of $\mathcal{P}$ with diameter at most $d(\bar{A}).$ Thus, if $(x^1, s^1, y), (x^2, s^2, y) \in \mathcal{P}$ are vertices then the distance between $(x^1, s^1, y)$ and $(x^2, s^2, y)$ is at most $d(\bar{A}).$
\end{lem}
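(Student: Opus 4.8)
The plan is to show that $Q(t)\times\{y\}$ is an honest face of $\mathcal P$ and then read off the diameter bound from the fact that $Q(t)$ has $\bar A$ as its equality-constraint matrix. First I would fix the vertex $(x,s,y)\in\mathcal P$ and set $t:=c_a+c_b-by$ as in the statement. Since $y$ has exactly $m_2-1$ basic variables, the set of $n_2-1$ indices of $y$ splits into a basic set of size $m_2-1$ and a nonbasic set; I would exhibit the linear functional on $\R^{n_1+n_2-1}$ given by summing the coordinates of $y$ over its nonbasic index set (equivalently, $\sum_{j\in N_y} e_j^\top (x,s,y)$ where $N_y$ is the nonbasic set of $y$ at this vertex), which is nonnegative on all of $\mathcal P$ by $x,s,y\ge 0$, and whose zero set is precisely $\{(x',s',y')\in\mathcal P : y'_j=0 \text{ for all } j\in N_y\}$. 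This is a face $F$ of $\mathcal P$ containing $(x,s,y)$.

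Next I would identify $F$ with $Q(t)\times\{y\}$. On $F$ the $y$-block is forced: the constraint $By'=c_B$ together with $y'_j=0$ for $j\in N_y$ pins $y'$ down uniquely to $y$, because the $(m_2-1)\times(m_2-1)$ submatrix of $B$ on the basic columns is invertible (simplicity of $R$). Plugging $y'=y$ into the middle linking row $ax'+s'+by'=c_a+c_b$ of $P(\bar A,\bar B)$ gives $ax'+s'=c_a+c_b-by=t$, so $(x',s')$ ranges over $\{(x',s'):Ax'=c_A,\ ax'+s'=t,\ x',s'\ge 0\}=Q(t)$. Conversely every point of $Q(t)\times\{y\}$ satisfies all the defining (in)equalities of $\mathcal P$ and lies in $F$. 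Hence $F=Q(t)\times\{y\}$ as polyhedra, and the vertex–edge structure (the graph) of a face coincides with the induced subgraph, so $\operatorname{diam}(F)=\operatorname{diam}(Q(t))\le\operatorname{diam}(\bar A)=d(\bar A)$, the inequality being exactly the observation already made in the excerpt that $Q(t)$ has $\bar A$ as equality-constraint matrix.

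Finally, for the last sentence: if $(x^1,s^1,y),(x^2,s^2,y)\in\mathcal P$ are vertices with the same $y$-block, I need them both to lie in a common such face. The subtlety is that a priori $y$ need only have \emph{at least} $m_2-1$ basic variables at each of these vertices, but Proposition \ref{PCClass} (categorization of vertices) says $y$ has either $m_2-1$ or $m_2$ basic variables; when it has $m_2-1$ I apply the construction above with the \emph{same} value $t=c_a+c_b-by$ (which depends only on $y$), so $(x^1,s^1,y)$ and $(x^2,s^2,y)$ are both vertices of $Q(t)\times\{y\}$, giving distance $\le d(\bar A)$ in that face and hence in $\mathcal P$. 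The case where $y$ has $m_2$ basic variables (category $(m_1-1,0,m_2)$) forces $s=0$ and $x$ to have $m_1-1$ basic variables, and I'd argue that then there is essentially a unique such vertex with this $y$, or handle it by noting $Q(t)$ still contains the relevant points; I'd want to be slightly careful here, but the main content is the face identification.

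The main obstacle I expect is the bookkeeping in the face identification — verifying that $F$ is exactly cut out by setting the right nonbasic $y$-coordinates to zero, that this forces $y'=y$ via invertibility of the basic submatrix of $B$, and that no other constraints get activated — rather than anything conceptually deep. A secondary point to get right is that ``$\operatorname{diam}$ of a face $\le$ the stated bound'' really does follow: faces of $Q(t)\times\{y\}$ are faces of $Q(t)$, the graph is the induced subgraph, and crucially $Q(t)$ is one of the polyhedra over which the maximum defining $\operatorname{diam}(\bar A)$ is taken, so no separate argument about diameters of faces is needed.
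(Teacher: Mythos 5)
Your argument is essentially the paper's own proof: you identify $Q(t)\times\{y\}$ as the face of $\mathcal{P}$ obtained by fixing the nonbasic $y$-coordinates at zero, use simplicity to conclude $y$ is the unique point with that support satisfying $By=c_B$, and then bound the diameter by $\operatorname{diam}(Q(t))\le d(\bar{A})$ since the face is the product of $Q(t)$ with a point. Your closing hesitation about the case where $y$ has $m_2$ basic variables is moot, because the lemma's hypothesis (and hence its final sentence) only concerns a $y$-block with $m_2-1$ basic variables, so both vertices lie in the single face $Q(t)\times\{y\}$ already constructed.
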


\begin{proof}
Let $(x^*, s^*, y^*) \in \mathcal{P}$ be such a vertex. The set $Q(t) \times \{ y^* \}$ is a face because of the following two facts. First, $Q(t) \times \{ y^* \}$ is a subset of the polyhedron that arises from forcing a collection of constraints to be satisfied with equality (i.e. $y_i = 0$ if $y_i^* = 0$). Second, by simplicity, there is no other $y$ with support contained in the support $y^*$ such that $By = c_B$. The diameter of $Q(t) \times \{ y^* \}$ is $d(\bar{A})$ because it is the Cartesian product of $Q(t)$ and a point.
\qed
\end{proof}

Lemma \ref{lift_lem} allows us to lift walks from a polyhedron $Q(t)$ to the parallel-connection polyhedron $\mathcal{P}$. In turn, we may transfer any known bounds on the lengths of those walks. By switching the roles of $Q$ and $R$ (or $x$ and $y$), it follows that walks from a polyhedron $R(t)$, for a particular $t$, can be lifted to $\mathcal{P}$, as well.

To prove the stated diameter bound for the parallel-connection polyhedron, we split the discussion into cases based on the basis splits of the start and end vertex. The first case considers two vertices that are not both of the first or third category. That is, they do not both have basis split $(m_1, 0, m_2-1)$ or $(m_1-1, 0, m_2).$ 

\begin{lem}\label{nbnb1}
Let $\mathcal{P}$ be simple. Let $(x^1, s^1, y^1)$, $(x^2, s^2, y^2) \in \mathcal{P}$ be vertices, where $y^1$ is comprised of $m_2 - 1$ basic variables and $x^2$ is comprised of $m_1 - 1$ basic variables. If $ax^2 \leq ax^1 + s^1$, then the distance between $(x^1, s^1, y^1)$ and $(x^2, s^2, y^2)$ is at most $d(\bar{A}) + d(\bar{B})$. Similarly, if $x^1$ is comprised of $m_1 - 1$ basic variables, $y^2$ is comprised of $m_2 - 1$ basic variables and $ax^1 \leq ax^2 + s^2$ then the distance between $(x^1, s^1, y^1)$ and $(x^2, s^2, y^2)$ is at most $d(\bar{A}) + d(\bar{B})$.
\end{lem}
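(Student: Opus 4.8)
The plan is to connect the two given vertices via a single intermediate vertex
\[
    w := (x^2,\, s^*,\, y^1), \qquad s^* := c_a + c_b - ax^2 - by^1 ,
\]
bounding the first leg of the walk by Lemma~\ref{lift_lem} (applied with the common last block $y^1$) and the second leg by the analogue of Lemma~\ref{lift_lem} obtained by interchanging the roles of $x$ and $y$ (applied with the common first block $x^2$). First I would record the arithmetic coming from membership in $\mathcal{P}$: writing $t_1 := c_a + c_b - by^1$, the middle equation of $\mathcal{P}$ applied to $(x^1,s^1,y^1)$ gives $ax^1 + s^1 = t_1$, so $s^* = t_1 - ax^2 = (ax^1 + s^1) - ax^2 \ge 0$ by the hypothesis $ax^2 \le ax^1 + s^1$. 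A direct substitution then shows $w \in \mathcal{P}$: the equations $Ax^2 = c_A$ and $By^1 = c_B$ are inherited from the two given vertices, $ax^2 + s^* + by^1 = c_a + c_b$ holds by the choice of $s^*$, and $x^2, s^*, y^1 \ge 0$.

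The crux is to show that $w$ is a \emph{vertex} of $\mathcal{P}$. Since $y^1$ is comprised of $m_2-1$ basic variables, Lemma~\ref{lift_lem} makes $Q(t_1)\times\{y^1\}$ a face of $\mathcal{P}$, and $w$ lies on it because $s^* \ge 0$ places $(x^2,s^*)$ in $Q(t_1)$. Since every vertex of a face of $\mathcal{P}$ is a vertex of $\mathcal{P}$, it suffices to check that $(x^2,s^*)$ is a vertex of $Q(t_1)$. This holds because $x^2$ being comprised of $m_1-1$ basic variables of $\mathcal{P}$ forces the $(m_1-1)\times(m_1-1)$ submatrix of $A$ on the columns where $x^2$ is positive to be nonsingular (it is the diagonal block of the nonsingular basis matrix at $(x^2,s^2,y^2)$ cut out by the rows $Ax=c_A$, which involve no other columns of that basis); together with the isolated $1$ in the last column of $\bar A$, the $m_1$ columns of $\bar A$ on $\{\,i : x^2_i>0\,\}$ and on that last column form a nonsingular submatrix, exhibiting $(x^2,s^*)$ as a basic feasible solution of $Q(t_1)$. (Simplicity of $\mathcal{P}$ moreover forces $s^*>0$, so $w$ lands in the basis split $(m_1-1,1,m_2-1)$.)

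With $w$ in hand, the two legs are immediate. The vertices $(x^1,s^1,y^1)$ and $w$ share the last block $y^1$, which has $m_2-1$ basic variables, so the concluding statement of Lemma~\ref{lift_lem} bounds their distance in the graph of $\mathcal{P}$ by $d(\bar A)$. Symmetrically, $(x^2,s^2,y^2)$ and $w$ share the first block $x^2$, which has $m_1-1$ basic variables, so the analogue of Lemma~\ref{lift_lem} with $x$ and $y$ interchanged bounds their distance by $d(\bar B)$. The triangle inequality for graph distance then yields the claimed bound $d(\bar A)+d(\bar B)$. The second assertion of the lemma is the first one with the two given vertices interchanged — the hypothesis $ax^1 \le ax^2 + s^2$ being exactly the counterpart of $ax^2 \le ax^1 + s^1$ — so it follows by symmetry.

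I expect the main obstacle to be the second paragraph: certifying that $w$ is a genuine vertex of $\mathcal{P}$, which is where the sign information $s^*\ge 0$ extracted from the hypothesis must be combined with the basis/block structure that makes the relevant submatrix of $\bar A$ nonsingular. The two leg bounds and the final triangle inequality are then routine once Lemma~\ref{lift_lem} (and its mirror) is available.
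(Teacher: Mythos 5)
Your proposal is correct and follows essentially the same route as the paper: it passes through the same intermediate vertex $(x^2, s^*, y^1)$, bounds the two legs by Lemma~\ref{lift_lem} and its $x$/$y$-swapped analogue, and obtains the second assertion by reversing the walk. The only difference is that you verify explicitly (via the block-triangular basis structure) that the intermediate point is a vertex, where the paper simply invokes simplicity of $\mathcal{P}$; this is a fuller justification of the same step, not a different argument.
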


\begin{proof}
We first note $(x^2, s', y^1)$ is feasible for some $s' > 0$ because $Ax^2 = c_A$, $By^1 = c_B,$ and $ax^2 + by^1 \leq ax^1 + s^1 + by^1 = c_a + c_b.$ Moreover, by simplicity of $\mathcal{P}$ the point $(x^2, s', y^1)$ is a vertex because $x^2$ has $m_1-1$ basic variables, $s'$ is basic, and $y^1$ has $m_2 -1$ basic variables. If $(x^2, s', y^1)$ was not a vertex, this would imply the existence of a degenerate vertex. By an application of Lemma \ref{lift_lem}, the distance between $(x^1, s^1, y^1)$ and $(x^2, s', y^1)$ is at most $d(\bar{A})$.

Additionally, we note that there exists a walk of length at most $d(\bar{B})$ from $(x^2, s', y^1)$ to $(x^2, s^2, y^2)$, which again follows from an application of Lemma \ref{lift_lem}. By adjoining this walk to the previous walk (in the natural way), we have shown that there exists a walk from $(x^1, s^1, y^1)$ to $(x^2, s^2, y^2)$ with length at most $d(\bar{A}) + d(\bar{B}).$

In a polyhedron, edge walks are reversible. Thus, if we want to construct a path from $(x^1, s^1, y^1)$ to $(x^2, s^2, y^2)$ where $x^1$ is comprised of $m_1 - 1$ basic variables, $y^2$ is comprised of $m_2 - 1$ basic variables and $ax^1 \leq ax^2 + s^2$, then it is sufficient to construct a path from  $(x^2, s^2, y^2)$ to $(x^1, s^1, y^1)$. The reversed path is a walk from $(x^1, s^1, y^1)$ to $(x^2, s^2, y^2)$. By the arguments above, the length of this path is at most $d(\bar{A}) + d(\bar{B})$.
\qed
\end{proof}


Lemma \ref{nbnb1} implies the following corollary concerning the distance between pairs of vertices where the shared variable is basic for each vertex (Category 2). 

\begin{cor}\label{basicbasic}
Suppose that $\mathcal{P}$ is simple and that $(x^1, s^1, y^1), (x^2, s^2, y^2) \in \mathcal{P}$ are vertices with $s^1$ and $s^2$ basic. Then there exists an edge walk from $(x^1, s^1, y^1)$ to $(x^2, s^2, y^2)$ with length at most $d(\bar{A}) + d(\bar{B}).$ 
\end{cor}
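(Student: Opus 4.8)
The goal is to connect two vertices $(x^1,s^1,y^1)$ and $(x^2,s^2,y^2)$ with $s^1,s^2$ basic (so both are in Category 2, basis split $(m_1-1,1,m_2-1)$) via a walk of length at most $d(\bar A)+d(\bar B)$. The plan is to apply Lemma \ref{nbnb1}, since both vertices already have the required structure: $x^1$ and $x^2$ each have exactly $m_1-1$ basic variables, and $y^1$ and $y^2$ each have exactly $m_2-1$ basic variables. So each of the two implications in Lemma \ref{nbnb1} applies *provided* the corresponding inequality hypothesis holds — the first needs $ax^2 \le ax^1+s^1$, the second needs $ax^1 \le ax^2+s^2$.

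Hence the only thing to check is that at least one of these two inequalities must hold. This is a simple dichotomy: since $s^1,s^2 \ge 0$ (feasibility), if $ax^2 \le ax^1$ then $ax^2 \le ax^1 \le ax^1 + s^1$, so the first hypothesis of Lemma \ref{nbnb1} is satisfied; and if instead $ax^2 > ax^1$, then $ax^1 < ax^2 \le ax^2 + s^2$, so the second hypothesis is satisfied. In either case Lemma \ref{nbnb1} yields an edge walk of length at most $d(\bar A)+d(\bar B)$, which is exactly the claim.

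I do not anticipate any genuine obstacle here — the corollary is essentially a case split feeding into the already-proven lemma, using only nonnegativity of the shared variable. The one thing to be slightly careful about is that the two vertices are genuinely of Category 2 so that the structural hypotheses of Lemma \ref{nbnb1} (the counts of basic variables in the $x$- and $y$-blocks) are met; but this is immediate from the definition of the basis split together with Proposition \ref{PCClass}, since $s$ basic forces the split $(m_1-1,1,m_2-1)$. So the proof is short: invoke the dichotomy on the sign of $ax^1-ax^2$, and apply the appropriate half of Lemma \ref{nbnb1}.
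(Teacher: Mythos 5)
Your proposal is correct and matches the paper's proof: the paper uses exactly the same dichotomy (either $ax^1 \le ax^2$ or $ax^2 \le ax^1$, hence one of $ax^1 \le ax^2 + s^2$ or $ax^2 \le ax^1 + s^1$ holds by nonnegativity of $s$) and then invokes Lemma \ref{nbnb1}. Your additional remark verifying the basis-split hypotheses via Proposition \ref{PCClass} is a harmless bit of extra care that the paper leaves implicit.
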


\begin{proof}
Either $ax^1 \leq ax^2$ or $ax^2 \leq ax^1.$ Thus, one of $ax^1 \leq ax^2 + s^2$ or $ax^2 \leq ax^1 + s^1$ is satisfied. Therefore, the claim follows by Lemma \ref{nbnb1}.
\qed
\end{proof}


To prove the stated bound in Theorem \ref{PCDia}, we reduce the remaining cases to the case addressed in Corollary \ref{basicbasic}. We demonstrate that this reduction can be accomplished with at most 2 additional steps in the walk.


\begin{thm} \label{PCDia}
The combinatorial diameter of $\mathcal{P}$ is at most $d(\bar{A}) + d(\bar{B}) + 2$.
\end{thm}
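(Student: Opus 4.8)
The plan is to reduce every pair of vertices of $\mathcal{P}$ to the situation of Corollary \ref{basicbasic}, paying at most one extra edge on each end. Given two vertices $(x^1,s^1,y^1)$ and $(x^2,s^2,y^2)$, I would first argue that from each vertex one can reach, in at most one edge step, a vertex in which the shared variable $s$ is basic (Category 2). Concretely, if $s^i$ is already basic there is nothing to do. If $s^i$ is nonbasic, then the vertex has basis split $(m_1,0,m_2-1)$ or $(m_1-1,0,m_2)$; in the first case $x^i$ carries an ``extra'' basic variable beyond the $m_1-1$ needed for $Ax=c_A$, and in the second case $y^i$ does. I would then show that the pivot bringing $s$ into the basis (increasing $s$ from $0$) is a genuine edge of $\mathcal{P}$: feasibility is maintained because we are only increasing $s$ while correspondingly decreasing the slack afforded by the ``extra'' basic variable in the $x$- or $y$-block, and the resulting point has exactly $m_1+m_2-1$ basic variables with $s$ basic, hence is a vertex by simplicity of $\mathcal{P}$ (a non-vertex here would force a degenerate vertex, contradicting simplicity — the same argument used in Lemma \ref{nbnb1}).

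Once both endpoints have been moved (in $\le 1$ step each) to vertices $(\tilde x^1,\tilde s^1,\tilde y^1)$ and $(\tilde x^2,\tilde s^2,\tilde y^2)$ with $\tilde s^1,\tilde s^2$ basic, Corollary \ref{basicbasic} gives an edge walk of length at most $d(\bar A)+d(\bar B)$ between them. Concatenating, the total length is at most $d(\bar A)+d(\bar B)+2$, which is the claimed bound. I would need to handle the degenerate sub-cases where an endpoint is already in Category 2 (so $0$ extra steps there, and the bound only improves), and observe that edge walks are reversible so the direction of the one-step pivot does not matter.

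The one genuine obstacle is verifying that the single pivot "make $s$ basic" is really an \emph{edge} (a one-dimensional face), not a longer walk or an interior chord — i.e. that the new point is adjacent to the old one. For a simple polyhedron this is exactly the statement that swapping one nonbasic variable for one basic variable along a feasible pivot yields an adjacent basic feasible solution, provided the result is again a basic feasible solution; the content is therefore to check (i) feasibility throughout the pivot (the increase of $s$ is bounded by a ratio test that is strictly positive because the ``extra'' basic variable in the $x$- or $y$-block is strictly positive at a vertex of the simple polyhedron $\mathcal{P}$) and (ii) that the endpoint is a vertex, which follows from simplicity as in Lemma \ref{nbnb1}. A secondary point to state cleanly is which variable leaves the basis: in Category $(m_1,0,m_2-1)$ it is one of the basic $x$-variables, and in Category $(m_1-1,0,m_2)$ one of the basic $y$-variables, so the new basis split is indeed $(m_1-1,1,m_2-1)$ in both cases, landing us squarely in Category 2 as required for Corollary \ref{basicbasic}.
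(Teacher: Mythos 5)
Your proposal is essentially the paper's own proof of Theorem \ref{PCDia}: move each endpoint, via at most one pivot that brings the shared variable $s$ into the basis, to a Category-2 vertex, and then invoke Corollary \ref{basicbasic} to get $d(\bar{A})+d(\bar{B})+2$. You in fact spell out the adjacency step that the paper merely asserts (``if $s^i$ is nonbasic, then $(x^i,s^i,y^i)$ is adjacent to a vertex where $s$ is basic''); the only caveat is that your ratio-test justification, exactly like the paper's assertion, tacitly assumes the pivot increasing $s$ is blocked by some basic variable rather than tracing an unbounded edge, which is the one place where additional care would be needed if the polyhedra are unbounded.
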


\begin{proof}
We assume that $\mathcal{P}$ is simple.
Let $(x^1, s^1, y^1), (x^2, s^2, y^2) \in \mathcal{P}$ be vertices. If $s^1$ and $s^2$ are basic, then the distance between $(x^1, s^1, y^1)$ and $(x^2, s^2, y^2)$ is at most $d(\bar{A}) + d(\bar{B})$ by Corollary \ref{basicbasic}. 

For $i \in \{ 1, 2 \}$, if $s^i$ is nonbasic, then $(x^i, s^i, y^i)$ is adjacent to a vertex where $s$ is basic. Therefore, by Corollary \ref{basicbasic}, the distance between $(x^1, s^1, y^1)$ and $(x^2, s^2, y^2)$ is at most $d(\bar{A}) + d(\bar{B}) + 2.$
\qed
\end{proof}

The diameter of the parallel connection of two Hirsch satisfying polyhedra does not exceed the Hirsch bound by more than 3. First, we note that the Hirsch bound is $m_1 + m_2 - 1$, which follows because the polyhedron is simple and has $m_1 + m_2 - 1$ equality constraints. By Theorem \ref{PCDia}, the diameter is at most $m_1 + m_2 + 2$ since $d(\bar{A}) \leq m_1$ and $d(\bar{B}) \leq m_2$. The result is stated below, without proof. 


\begin{cor}[Hirsch - Parallel Connection]
Let $Q, R$ be polyhedra that both satisfy the Hirsch bound for all right-hand sides. Then, the combinatorial diameter of $\mathcal{P}$ does not exceed the Hirsch bound by more than 3. 
\end{cor}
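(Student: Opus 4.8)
The plan is to combine the diameter bound of Theorem~\ref{PCDia} with the explicit value of the Hirsch quantity $f-d$ for the parallel-connection polyhedron. First I would pin down that Hirsch quantity. As always in Section~\ref{PCSec} we may assume $\mathcal{P}$ is simple, and it is written in irredundant standard form with constraint matrix $P(\bar A,\bar B)\in\R^{(m_1+m_2-1)\times(n_1+n_2-1)}$ of full row rank; hence $\mathcal{P}$ has $f=n_1+n_2-1$ facets and dimension $d=(n_1+n_2-1)-(m_1+m_2-1)$, so $f-d=m_1+m_2-1$. This is the ``Hirsch bound'' appearing in the statement.

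Next I would unpack the hypothesis. Since $Q$ is in irredundant standard form with the full-rank matrix $\bar A\in\R^{m_1\times n_1}$, its Hirsch quantity is $n_1-(n_1-m_1)=m_1$; the assumption that $Q$ satisfies the Hirsch bound \emph{for every} right-hand side therefore says exactly that every polyhedron with constraint matrix $\bar A$ has diameter at most $m_1$, i.e.\ $d(\bar A)=\operatorname{diam}(\bar A)\le m_1$. Symmetrically, $d(\bar B)\le m_2$. These are precisely the quantities feeding Theorem~\ref{PCDia}.

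Finally I would apply Theorem~\ref{PCDia} directly: $\operatorname{diam}(\mathcal{P})\le d(\bar A)+d(\bar B)+2\le m_1+m_2+2=(m_1+m_2-1)+3$. Thus $\operatorname{diam}(\mathcal{P})$ exceeds the Hirsch bound $m_1+m_2-1$ by at most $3$, which is the claim.

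I do not expect a genuine obstacle here — this is an immediate corollary of Theorem~\ref{PCDia} — but two points deserve a sentence of care. First, the reduction to the simple case: one invokes the standard fact (cited earlier in the paper) that a suitable right-hand-side perturbation of a degenerate polyhedron cannot decrease its diameter, so it suffices to bound $\operatorname{diam}(\mathcal{P})$ under the simplicity assumption used throughout the section. Second, one should be explicit that it is the ``for all right-hand sides'' form of the hypothesis that licenses passing from a per-polyhedron Hirsch bound for $Q$ and $R$ to the matrix-level bounds $d(\bar A)\le m_1$ and $d(\bar B)\le m_2$; once that is recorded, the computation above is the whole proof.
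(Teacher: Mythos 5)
Your proposal is correct and matches the paper's own argument: the paper likewise identifies the Hirsch bound of $\mathcal{P}$ as $m_1+m_2-1$ (from simplicity and the $m_1+m_2-1$ equality constraints), notes $d(\bar{A})\leq m_1$ and $d(\bar{B})\leq m_2$ from the all-right-hand-sides hypothesis, and applies Theorem~\ref{PCDia} to get $m_1+m_2+2$. Your added remarks on the perturbation to the simple case and on why the hypothesis yields the matrix-level bounds are consistent with the paper's earlier discussion and do not change the route.
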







To refine our analysis, we show that the distance between two vertices of the same category is at most $d(\bar{A}) + d(\bar{B}) + 1$. This fact, along with Lemma \ref{nbnb1}, shows that the bottleneck case in our bound is the combination of two vertices, one in Category 1 and the other in Category 3, and the inequality comparing $ax^1$ and $ax^2$ in Lemma \ref{nbnb1} is not satisfied. An improvement to this case would improve the bound to  $d(\bar{A}) + d(\bar{B}) + 1$. 

\begin{lem} \label{nocrossover}
Suppose that $(x^1, s^1, y^1)$ and $(x^2, s^2, y^2)$ are vertices of $\mathcal{P}$. If both $x^1$ and $x^2$ are comprised of $m_1 - 1$ basic variables, then the distance between $(x^1, s^1, y^1)$ and $(x^2, s^2, y^2)$ is bounded by $d(\bar{A}) + d(\bar{B}) + 1$. The same bound can be derived if both $y^1$ and $y^2$ are comprised of $m_2 - 1$ basic variables. 
\end{lem}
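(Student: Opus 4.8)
The plan is to run a case analysis on the basis splits of the two given vertices (Proposition~\ref{PCClass}) and reduce everything to Lemma~\ref{nbnb1} and Corollary~\ref{basicbasic}, spending at most one additional edge. Since $x^1$ and $x^2$ both have $m_1-1$ basic variables, each of $(x^1,s^1,y^1)$ and $(x^2,s^2,y^2)$ lies in Category~2 (where $s$ is basic) or Category~3 (where $s$ is nonbasic, $=0$, and $y$ has $m_2$ basic variables). If both lie in Category~2, Corollary~\ref{basicbasic} already produces an edge walk of length at most $d(\bar{A})+d(\bar{B})$, which is within the claimed bound; so the real work is the cases in which at least one of the two vertices lies in Category~3.

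The key subroutine is that a Category~3 vertex $(x,0,y)$ has an adjacent Category~2 vertex, obtained exactly as in the proof of Theorem~\ref{PCDia}: $s$ is nonbasic, so it indexes an edge of the simple polyhedron $\mathcal{P}$ along which $s$ increases; since a vertex of $\mathcal{P}$ must retain at least $m_1-1$ basic $x$-variables, the variable leaving along this edge is a basic $y$-variable, so the neighbour has the same $x$, has $s$ basic (hence $s>0$ by simplicity), and has $m_2-1$ basic $y$-variables — i.e. it is in Category~2. Write $s'>0$ (respectively $s''>0$) for the value of $s$ at such a Category~2 neighbour of $(x^1,s^1,y^1)$ (respectively $(x^2,s^2,y^2)$), in the case that the corresponding vertex is in Category~3.

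Now I would combine this with Lemma~\ref{nbnb1}. Suppose first (up to relabelling) that $(x^1,s^1,y^1)$ is in Category~2 and $(x^2,s^2,y^2)$ is in Category~3. If $ax^2\le ax^1+s^1$, Lemma~\ref{nbnb1} directly gives a walk of length at most $d(\bar{A})+d(\bar{B})$; otherwise $ax^1<ax^2\le ax^2+s''$, so after the single step replacing $(x^2,s^2,y^2)$ by its Category~2 neighbour, the second half of Lemma~\ref{nbnb1} applies (using that $x^1$ has $m_1-1$ basic variables and the neighbour has $m_2-1$ basic $y$-variables), giving total length at most $d(\bar{A})+d(\bar{B})+1$. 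Suppose instead both vertices lie in Category~3. Then at least one of $ax^2\le ax^1+s'$ or $ax^1\le ax^2+s''$ must hold: if both failed, adding them would yield $0>s'+s''$, contradicting $s',s''>0$. In the first case, replace $(x^1,s^1,y^1)$ by its Category~2 neighbour (one step) and apply the first half of Lemma~\ref{nbnb1} to reach $(x^2,s^2,y^2)$; in the second case, symmetrically replace $(x^2,s^2,y^2)$ by its Category~2 neighbour and apply the second half of Lemma~\ref{nbnb1}. Either way the walk has length at most $d(\bar{A})+d(\bar{B})+1$. The statement with $y^1,y^2$ having $m_2-1$ basic variables follows by interchanging the roles of $\bar{A}$ and $\bar{B}$ (and of $Q$ and $R$), with Category~1 now playing the role of Category~3.

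The main obstacle is budgetary: the naive route — perturb both Category~3 endpoints into Category~2 and then invoke Corollary~\ref{basicbasic} — costs two extra steps and only yields $d(\bar{A})+d(\bar{B})+2$. What saves the argument is the observation that $s',s''>0$ forces (at least) one of the two inequalities comparing $ax^1$ and $ax^2$ required by Lemma~\ref{nbnb1} to hold, so that a single perturbation always suffices; getting the bookkeeping of which half of Lemma~\ref{nbnb1} to apply straight in each sub-case is the delicate part. A secondary technical point, shared with the proof of Theorem~\ref{PCDia}, is that the edge indexed by $s$ out of a Category~3 vertex is assumed to lead to an adjacent vertex rather than being an unbounded ray; in the degenerate situations where this fails, the polyhedron turns out to be small enough that the claimed bound holds directly.
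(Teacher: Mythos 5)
Your proposal is correct and takes essentially the same route as the paper: both arguments dispose of the Category~2--Category~2 case via Corollary~\ref{basicbasic} and otherwise spend one pivot moving a Category-3 endpoint along the increasing-$s$ edge (with $x$ fixed, since it has only $m_1-1$ basic variables) to an adjacent Category-2 vertex before invoking Lemma~\ref{nbnb1}. If anything, your write-up is more careful than the paper's, which perturbs $(x^1,s^1,y^1)$ and applies Lemma~\ref{nbnb1} without verifying its inequality hypothesis, whereas your observation that $ax^2>ax^1+s'$ and $ax^1>ax^2+s''$ cannot both hold (since $s'+s''>0$) decides which endpoint to perturb and makes that hypothesis explicit; the one caveat you flag but only wave at --- the increasing-$s$ edge could be an unbounded ray, so the Category-2 neighbour need not exist --- is an assumption the paper itself makes silently, both here and in the proof of Theorem~\ref{PCDia}.
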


\begin{proof}
We assume that $\mathcal{P}$ is simple.
If both $s^1$ and $s^2$ are basic, then by Corollary \ref{basicbasic} the distance is bounded by $d(\bar{A}) + d(\bar{B})$. If only $s^1$ is basic, then $s^2$ is identically zero, and thus by Lemma \ref{nbnb1}, the distance between $(x^1, s^1, y^1)$ and $(x^2, s^2, y^2)$ is at most $d(\bar{A}) + d(\bar{B}).$

If both $s^1$ and $s^2$ are nonbasic, then an extra step is required. For the first step, increase the coordinate $s$ of $(x^1, s^1, y^1).$ As $x^1$ is comprised of $m_1 - 1$ basic variables and any feasible solution must satisfy $Ax = c_A$, it follows that $x^1$ must remain fixed. After increasing $s$, the resulting vertex, denoted $(x^1, s', y')$, must have basis split $(m_1-1, 1, m_2-1).$ 

Again by Lemma \ref{nbnb1}, the distance between $(x^1, s', y')$ and $(x^2, s^2, y^2)$ is at most $d(\bar{A}) + d(\bar{B})$. Because $(x^1, s', y')$ is adjacent to $(x^1, s^1, y^1)$, it follows that the distance between $(x^1, s^1, y^1)$ and $(x^2, s^2, y^2)$ is at most $d(\bar{A}) + d(\bar{B}) + 1.$
\qed
\end{proof}



{
\renewcommand{\arraystretch}{1.1}
\begin{table}
\small
\title{}
    \centering
        \begin{tabular}{c|c|c|c|c}
            \textbf{  Initial Basis Split } & \textbf{Final Basis Split} & \textbf{  Inequality  }  & \textbf{  Diam. Bound  } & \textbf{ Reference } \\
            \hline 
            $(m_1-1, 1, m_2-1)$ & $(m_1-1, 1, m_2-1)$ & N/A  & $d(\bar{A}) + d(\bar{B})$ & \textbf{ Corollary \ref{basicbasic} }\\ 
            \hline
            $(m_1-1, 1, m_2-1)$ & $(m_1-1, 0, m_2)$ & $ax^1 \geq ax^2$ & $d(\bar{A}) + d(\bar{B})$ & \textbf{ Lemma \ref{nbnb1} }\\ 
            \hline
            $(m_1-1, 1, m_2-1)$ & $(m_1, 0, m_2-1)$ & $ax^1 \leq ax^2$  & $d(\bar{A}) + d(\bar{B})$ & \textbf{ Lemma \ref{nbnb1} } \\ 
            \hline
            $(m_1-1, 1, m_2-1)$ & $(m_1-1, 0, m_2)$ & $ax^1 < ax^2$ & $d(\bar{A}) + d(\bar{B}) + 1$ & \textbf{ Theorem \ref{PCDia} } \\ 
            \hline
            $(m_1-1, 1, m_2-1)$ & $(m_1, 0, m_2-1)$ & $ax^1 > ax^2$  & $d(\bar{A}) + d(\bar{B}) + 1$ & \textbf{ Theorem \ref{PCDia} } \\ 
            \hline
            $(m_1, 0, m_2-1)$ & $(m_1, 0, m_2-1)$ & N/A & $d(\bar{A}) + d(\bar{B}) + 1$ & \textbf{ Lemma \ref{nocrossover} } \\
            \hline
            $(m_1-1, 0, m_2)$ & $(m_1-1, 0, m_2)$ & N/A  & $d(\bar{A}) + d(\bar{B}) + 1$ & \textbf{ Lemma \ref{nocrossover} } \\
            \hline
            
            $(m_1, 0, m_2-1)$ & $(m_1-1, 0, m_2)$ & $ax^1 \geq ax^2$  & $d(\bar{A}) + d(\bar{B})$ & \textbf{ Lemma \ref{nbnb1} } \\
            \hline
            $(m_1-1, 0, m_2)$ & $(m_1, 0, m_2-1)$ & $ax^1 \leq ax^2$  & $d(\bar{A}) + d(\bar{B})$ & \textbf{ Corollary \ref{nbnb1} } \\
            \hline
            $(m_1, 0, m_2-1)$ & $(m_1-1, 0, m_2)$ & $ax^1 < ax^2$  & $d(\bar{A}) + d(\bar{B}) + 2$ & \textbf{ Theorem \ref{PCDia} } \\
            \hline
            $(m_1-1, 0, m_2)$ & $(m_1, 0, m_2-1)$ & $ax^1 > ax^2$  & $d(\bar{A}) + d(\bar{B})+ 2$ & \textbf{ Theorem \ref{PCDia} } \\
        \end{tabular}
        \caption{Diameter bounds for the parallel-connection polyhedron by each case of basis split for initial and final vertex.}
        \label{tab:parallel}
\end{table}
}

Table \ref{tab:parallel} gives an exhaustive list of cases and the resulting diameter bounds. The cases are distinguished by categories of vertices and whether or not $ax^1 \leq ax^2$.


\section{Series Connection} \label{SeriesSec}

In this section, we establish two bounds for the diameter of a series-connection polyhedron. The first bound requires $\mathcal{S}$ to be an integral polyhedron, which is not a restriction for any polyhedron with rational input. The bound is presented in terms of the maximum and minimum values that $s$ may take. Bounds of this type are typical in the studies of lattice polytopes. For example, Deza and Pournin proved that the diameter of a lattice polytope $P$ is at most $kd - \lceil 2/3d \rceil$ where $P \subseteq [0, k]^d$ when $k > 2$ \cite{deza2018improved}. 

The second bound relaxes the requirement that $\mathcal{S}$ is integral but requires a type of boundedness for pairs of vertices where the path we want to lift violates a condition we require on $s$. We will call this type of boundedness the $s$-bounded diameter (Definition \ref{def:boundeddiam}). In turn, we establish a bound that can be expressed in the diameters and $s$-bounded diameters of $Q$ and $R$.

For the series connection, we require the walks in $Q$ and $R$ to be {\em non-revisiting}. As a service to the reader, we recall some background. This property is typically only used in the context of simple polyhedra because each step of a walk leaves exactly one facet and enters exactly one facet.

\begin{defi}
Let $P$ be a simple polyhedron. A walk in $P$ is non-revisiting if every step of the walk enters a new facet, not previously visited before.  
\end{defi}

The non-revisiting conjecture asks if there exists a non-revisiting walk between every pair of vertices of a polyhedron. If a polyhedron has this property, we say the polyhedron {\em satisfies non-revisiting}. The non-revisiting conjecture is intimately related to the Hirsch conjecture. The conjectures are, in fact, equivalent \cite{kw-67}.

\begin{prop} \label{nonrevis_hirsch}
Let $P$ be a simple polyhedron. $P$ satisfies the Hirsch bound if and only if $P$ satisfies the non-revisiting conjecture. 
\end{prop}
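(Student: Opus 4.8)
The plan is to prove the two implications separately: the direction ``non-revisiting $\Rightarrow$ Hirsch'' is a short counting argument, while ``Hirsch $\Rightarrow$ non-revisiting'' is the classical reduction of Klee and Walkup \cite{kw-67}.

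For the first direction I would begin by recording the standard reformulation of the definition: in a simple polyhedron every edge step leaves exactly one facet and enters exactly one facet, so a walk enters a \emph{new} facet at each step precisely when, for every facet $F$, the set of indices $i$ with $v_i \in F$ is a (possibly empty) contiguous interval --- once the walk leaves $F$ it never returns. Now suppose every pair of vertices of $P$ is joined by a non-revisiting walk, and let $v_0, v_1, \dots, v_\ell$ be such a walk. Since $P$ is simple and $d$-dimensional, $v_0$ lies on exactly $d$ facets; each of the $\ell$ steps enters a facet containing none of the earlier vertices, so these $\ell$ entered facets are pairwise distinct and distinct from the $d$ facets through $v_0$. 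Hence the walk meets at least $d+\ell$ of the $f$ facets of $P$, which forces $\ell \le f-d$. As this holds for every pair of vertices, $\operatorname{diam}(P) \le f-d$, the Hirsch bound.

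For the converse I would argue by induction on the dimension $d$ (equivalently on $d+f$), following Klee--Walkup. Given vertices $u, v$ of $P$, distinguish two cases. If $u$ and $v$ lie on a common facet $F$, then $F$ is a simple $(d-1)$-polyhedron with strictly fewer facets, and — this is the point that makes the induction go through — one needs the Hirsch bound to be available for $F$; by induction there is then a non-revisiting walk from $u$ to $v$ inside $F$, and since that walk never leaves $F$ it never re-enters $F$, while every other facet of $P$ meets $F$ in a facet of $F$, so the walk is non-revisiting in $P$ as well. The genuinely hard case is when $u$ and $v$ lie on no common facet: then $u$ and $v$ each lie on $d$ facets and these $2d$ facets are all distinct, so $f \ge 2d$, and one must manufacture a non-revisiting walk out of the mere existence of a length-$(\le f-d)$ walk granted by the Hirsch bound. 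Here I would invoke the wedge / $d$-step reduction of \cite{kw-67}: pass to a wedge over $P$, a simple polyhedron of one higher dimension and one more facet on which the Hirsch bound is again available, take a Hirsch-length walk in it, and project and perturb it back to a non-revisiting walk in $P$ joining $u$ and $v$.

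I expect this last case to be the main obstacle, both because the wedge must be set up so as to preserve simplicity and the required facet incidences, and because it is exactly where one uses the Hirsch bound not merely for $P$ but for an auxiliary family of polyhedra built from $P$ (its facets and wedges over it). This is unproblematic when ``$P$ satisfies the Hirsch bound'' is read at the level of the conjecture for the whole class, as in \cite{kw-67}; it is also the reason that in our applications — notably Theorem \ref{thm:series2} — the non-revisiting property of $Q$ and $R$ is assumed directly rather than only inferred from a Hirsch bound on $Q$ and $R$ in isolation.
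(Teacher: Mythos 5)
Your proposal is correct and follows essentially the same route as the paper, which gives no proof of its own but cites \cite{kw-67}: your facet-counting argument for the direction non-revisiting $\Rightarrow$ Hirsch is exactly the argument the paper alludes to (and the only direction it actually uses), while the converse is, as in the paper, deferred to the Klee--Walkup wedge/induction construction. Your closing caveat is well taken: that converse is an equivalence of the two conjectures at the level of a class of polyhedra (it needs the Hirsch bound for facets and wedges built from $P$, not just for $P$ itself), which is precisely why the paper later assumes the non-revisiting property of $Q$ and $R$ directly rather than inferring it from a Hirsch bound on $Q$ and $R$ in isolation.
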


The proof of Proposition \ref{nonrevis_hirsch} (see \cite{kw-67}) demonstrates that if a walk is non-revisiting, then the length of the walk is bounded above by the Hirsch bound. Because the length of a non-revisiting walk is at most the Hirsch bound, we assume that the walks that  we consider also satisfy non-revisiting. For the remainder of this section, we assume that $Q$ and $R$ satisfy the non-revisiting conjecture, and hence the Hirsch bound, for all right-hand sides.

The non-revisiting property also implies that if $v$ and $w$ are vertices on the same face, then there exists a walk from $v$ to $w$ with length bounded by Hirsch that does not leave the face. We use this property in our work to construct a walk between two vertices both on the facet defined by $s = 0$. The non-revisiting property ensures that the walk does not leave the facet and still has length bounded by Hirsch. This property is derived from the fact that if a simple polyhedron has the non-revisiting property then every face of the polyhedron has the non-revisiting property. The idea generalizes further. We demonstrate that if a polyhedron (not necessarily simple) satisfies Hirsch for all right-hand sides, then any face also satisfies Hirsch for all right-hand sides. We prove a particular formulation of this observation below.


\begin{lem} \label{degeneracy_lem}
Let $A' := \begin{bmatrix} A & a \end{bmatrix}$ be a matrix such that any polyhedron with constraint matrix $A'$ satisfies the Hirsch bound for all right-hand sides. Then, a polyhedron with constraint matrix $A$ satisfies the Hirsch bound for all right-hand sides. 
\end{lem}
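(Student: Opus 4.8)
The plan is to realize a polyhedron with constraint matrix $A$ (up to combinatorial equivalence) as the face $\{\xi=0\}$ of a polyhedron with constraint matrix $A'$, and then invoke the fact, recalled in the discussion preceding this lemma, that faces of simple polyhedra inherit the non-revisiting property. Write $A\in\R^{m\times n}$ in standard, irredundant form, so $m=\rank A$ and the Hirsch bound for polyhedra with constraint matrix $A$ is $n-(n-m)=m$; since $A' = \begin{bmatrix}A & a\end{bmatrix}\in\R^{m\times(n+1)}$ also has full row rank, its Hirsch bound is $(n+1)-(n+1-m)=m$ as well, so passing from $A$ to $A'$ costs nothing. Fix a right-hand side and two vertices $u,v$ of $P:=\{x\in\R^n:Ax=b,\ x\ge 0\}$; we must produce a walk of length at most $m$ between them. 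By the perturbation result of \cite{ykk-84}, the maximum of $\operatorname{diam}$ over all right-hand sides for a fixed constraint matrix is attained at a simple polyhedron, so it suffices to bound $\operatorname{diam}$ for an arbitrary \emph{simple} polyhedron with constraint matrix $A$. Hence we may assume $P$ is simple from the start.

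First I would introduce the auxiliary polyhedron
\[
    P'(b') := \bigl\{(x,\xi)\in\R^{n+1} : Ax + a\xi = b',\ x\ge 0,\ \xi\ge 0\bigr\},
\]
which has constraint matrix $A'$. Its face $\{\xi = 0\}$ equals $\{(x,0):Ax=b',\ x\ge 0\}$, i.e.\ a copy of $\{x:Ax=b',\ x\ge 0\}$. Because $P$ is simple, its vertices are nondegenerate and the basis structure is stable under small perturbations of the right-hand side, so there is a neighborhood $U$ of $b$ such that for every $b'\in U$ this face is combinatorially isomorphic to $P$, with vertices, edges, and walks corresponding under the isomorphism. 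On the other hand, for a generic $b'$ the polyhedron $P'(b')$ is simple. Since the first requirement is an open condition and the second a full-measure condition, I can choose $b'\in U$ with $P'(b')$ simple; call it $P'$ and let $F$ be its face $\{\xi=0\}$, so that $F\cong P$.

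Then I would conclude as follows. By hypothesis $P'$ satisfies the Hirsch bound, and being simple it satisfies the non-revisiting conjecture by Proposition \ref{nonrevis_hirsch}. Since a face of a simple polyhedron with the non-revisiting property again has that property, $F$ satisfies non-revisiting, so the images $u',v'\in F$ of $u,v$ are joined by a non-revisiting walk that never leaves $F$; by the argument underlying Proposition \ref{nonrevis_hirsch}, such a walk has length at most the Hirsch bound of $F$. As a polyhedron in standard irredundant form, $F$ has $n$ facets and dimension $n-m$, so this bound is $m$. Transporting the walk back through the isomorphism $F\cong P$ gives a walk of length at most $m$ from $u$ to $v$. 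As $u,v$ were arbitrary, $\operatorname{diam}(P)\le m$, the Hirsch bound for constraint matrix $A$.

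The main obstacle is the simultaneous choice of $b'$: one must perturb enough that $P'(b')$ is simple (so the "faces inherit non-revisiting" fact applies), while keeping the face $\{\xi=0\}$ a faithful combinatorial copy of the given polyhedron. This is exactly why the reduction to a simple $P$ via \cite{ykk-84} is needed first — for a degenerate $P$ a generic perturbation of $b'$ would resolve the degeneracy and the face would no longer match $P$. A secondary technical point is checking that $F$ is itself in standard irredundant form (inherited from $P$) so that its Hirsch bound is exactly $m$ rather than something larger.
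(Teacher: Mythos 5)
Your proposal is correct and takes essentially the same route as the paper: realize the $A$-polyhedron as the $\xi=0$ face of an $A'$-polyhedron, combine the hypothesis with Proposition \ref{nonrevis_hirsch} to get the non-revisiting property of the simple ambient polyhedron, let the face inherit it, and perturb the right-hand side to reach simplicity. The only difference is bookkeeping of the perturbation: the paper keeps the original right-hand side, perturbs $P'$ to a simple polyhedron with the same feasible bases, and maps walks in the perturbed face back to $P$ (adjacent vertices project to adjacent-or-equal vertices), whereas you first invoke the \cite{ykk-84} reduction to assume $P$ simple and then choose a nearby generic $b'$ so that the face remains a combinatorial copy of $P$ -- both variants are valid.
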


\begin{proof}
Let $P' := \{ x : A'x = b, x \geq 0 \}$ and $P := \{ x : Ax = b, x \geq 0 \}$ for some $b \in \R^{m}.$ Let $k$ denote the Hirsch bound for $A'$. If $P$ is empty, then $P$ satisfies the Hirsch bound vacuously. 

We suppose that $P$ is nonempty. If $P'$ is simple, then $P'$ satisfies non-revisiting. Therefore, $P$ satisfies non-revisiting, and thus Hirsch, because $P$ is a face of $P'$. Now, we assume that $P'$ is degenerate. Let $y \in \R^m$ be a perturbation of $P'$ such that $P'(y) := \{ x : A'x = b+y, x \geq 0 \}$ is simple and has the same set of feasible bases as $P'$. The polyhedron $P(y) := \{ x : Ax = b+y, x \geq 0 \}$ is a face of $P'(y)$. Thus $P(y)$ is simple and satisfies non-revisiting.

Let $v^1, v^2 \in P$ be vertices and $v_y^1, v_y^2 \in P(y)$ be resulting vertices after perturbing $v^1, v^2$ (vertices with the same basis). Because $P(y)$ satisfies non-revisiting, there exists a non-revisiting walk between $v_y^1$ and $v_y^2$ of length at most $k.$ If two vertices are adjacent in $P(y)$, then the corresponding vertices in $P$ are either adjacent or equal. Thus, the distance from $v^1$ to $v^2$ is at most $k.$ Therefore, $P$ satisfies Hirsch.
\qed
\end{proof}




Similar to our work with $\mathcal{P}$, we will refer to points of $\mathcal{S}$ as triples $(x, s, y)$ where $x \in \R^{n_1 - 1}$, $s \in \R$, $y \in \R^{n_2 - 1}$. With this formulation, we have: 
    \[
        \mathcal{S} = 
        \left\{ 
            \begin{pmatrix}
                x \\
                s \\
                y
            \end{pmatrix} \in \R^{n_1 + n_2 - 1} : 
        \begin{bmatrix}
            A & 0 & 0 \\
            a & 1 & 0 \\
            0 & 1 & b \\
            0 & 0 & B
        \end{bmatrix} \begin{pmatrix}
                x \\
                s \\
                y
            \end{pmatrix} =
            \begin{pmatrix}
                c_A \\
                c_a \\
                c_b \\
                c_B
            \end{pmatrix},
            (x, s, y) \geq 0
        \right\}. 
    \]
    Recall, we assume that $\bar{A}$ and $\bar{B}$ have full rank and $\mathcal{S}$ is simple. Additionally, we note the series connection is, in fact, more general than we have presented.
    If $M$ and $N$ are any pair of matrices then the definition of $S(\cdot, \cdot)$ can be extended to $M, N$ in the following fashion:
    \[
    M := \begin{bmatrix}
        M' & m_1 \\
        m_2 & m
    \end{bmatrix}
    , \quad
    N := \begin{bmatrix}
        n & n_2 \\
        n_1 & N'
    \end{bmatrix}
    , \quad
    S(M, N) := \begin{bmatrix}
        M' & m_1 & 0 \\
        m_2 & m & 0 \\
        0 & n & n_2 \\
        0 & n_1 & N'
    \end{bmatrix}.
\]
 If the final column of $M$ and the first column of $N$ are nonzero, then through row operations $S(M, N)$ can be reduced to $S(\bar{A}, \bar{B})$ for some $\bar{A}, \bar{B}$. As the graph of $\mathcal{S}$ is not affected by row operations, all results concerning the diameter still hold. However, if the last column of $M$ or first column of $N$ is the zero vector, then the resulting series connection reduces to a Cartesian product. The diameter bounds we establish still hold for this case.


\subsection{Categorization of Vertices}\label{sec:classseries}

We first establish an understanding of the graph of $\mathcal{S}$; see Definitions \ref{def:S} and \ref{def:SQR}. Similar to the parallel-connection polyhedron, we categorize the vertices through an observation on the number of basic variables in the $x$- and $y$-blocks. The vertices of $\mathcal{S}$ are in one-to-one correspondence with the basic feasible solutions of $\mathcal{S}$ because $\mathcal{S}$ is simple. Any basic feasible solution has $m_1 + m_2$ (nonzero) basic variables because the simple $\mathcal{S}$ has $m_1 + m_2$ equality constraints. By simplicity of $Q$ and $R$, the equations $Ax = c_A$ and $By = c_B$ each require $m_1-1$ and $m_2-1$ (nonzero) basic variables, respectively, to be satisfied. Therefore, a vertex $(x, s, y) \in \mathcal{S}$ has at least $m_1-1$ basic variables in the $x$-block and $m_2-1$ basic variables in the $y$-block. Thus, we have the following categorization (recall the notation of $Q(t)$ and $R(t)$ introduced in Subsection \ref{PCDiaBd}):

\begin{prop} \label{seriesVerClass}
If $\mathcal{S}$ is simple and has an equality constraint matrix with linearly-independent rows, then the vertices of $\mathcal{S}$ can be partitioned into the following three categories:
\begin{enumerate}
    \item The vector $(x, 0)$ is a vertex of $Q$, the variable $s$ is nonbasic, and the vector $(0, y)$ is a vertex of $R.$
    \item The variable $s$ is basic, for $t: = c_a - s$, the vector $(x, 0)$ is a vertex of $Q(t)$, and the vector $(s, y)$ is a vertex of $R.$
    \item The variable $s$ is basic, the vector $(x, s)$ is a vertex of $Q$, and for $t := c_b - s$, the vector $(0, y)$ is a vertex of $R(t).$
\end{enumerate}
\end{prop}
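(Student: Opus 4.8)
The plan is to analyze a basic feasible solution $(x,s,y)$ of $\mathcal{S}$ directly from the block structure of $S(\bar A,\bar B)$, and show the three cases correspond exactly to whether $s$ is nonbasic, or $s$ is basic with the "extra" basic variable sitting in the $x$-block or in the $y$-block.

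First I would set up the counting. Since $\mathcal{S}$ is simple and its constraint matrix has $m_1+m_2$ linearly-independent rows, every vertex is a basic feasible solution with exactly $m_1+m_2$ (nonzero) basic variables, and the basis columns of $S(\bar A,\bar B)$ are linearly independent. Restricting the system $S(\bar A,\bar B)(x,s,y)^\top = (c_A,c_a,c_b,c_B)^\top$ to its first $m_1-1$ rows gives $Ax = c_A$, and to its last $m_2-1$ rows gives $By=c_B$; as argued before Proposition~\ref{seriesVerClass}, simplicity of $Q$ and $R$ forces at least $m_1-1$ basic variables among the $x$-coordinates and at least $m_2-1$ among the $y$-coordinates. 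That accounts for $m_1+m_2-2$ basic variables, leaving exactly $2$ more to place among the $x$-block, the $y$-block, and the singleton $s$. The key linear-algebra observation is that $s$ cannot be the only "extra" basic variable together with a split that leaves both $x$ and $y$ with only their minimal counts being impossible for a different reason — rather, I claim the two middle rows $ax + s = c_a$ and $s + by = c_b$ pin things down so that exactly one of the following holds: (i) $s$ nonbasic, and then $x$ has $m_1-1$ and $y$ has $m_2-1$ basic variables is impossible (only $m_1+m_2-2$ basic variables), so one extra goes to each block — wait, that would be $m_1+m_2$, good; but then I must rule out $s$ nonbasic with one block getting both extras. The cleaner route: argue by which rows are "used."

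So the core step is: when $s$ is nonbasic, the column of $s$ is absent, and the two middle rows become $ax=c_a$ and $by=c_b$; these are constraints purely on $x$ and on $y$ respectively, so the $x$-coordinates must form a basic feasible solution of $\{Ax=c_A, ax=c_a, x\ge 0\}=Q$ (with $s=0$) and likewise $(0,y)$ must be a vertex of $R$ — this forces $x$ to have $m_1$ basic variables and $y$ to have $m_2$, i.e. exactly one extra in each block, giving Category~1. When $s$ is basic, the column $e$ of $s$ (which has $1$'s in exactly the two middle rows) is in the basis; then among the remaining $m_1+m_2-1$ basis columns we have $m_1-1$ forced in $x$, $m_2-1$ forced in $y$, and exactly one more to place, in either the $x$-block or the $y$-block. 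If it lands in the $x$-block, then $(x,s)$ together satisfy $Ax=c_A$ and $ax+s=c_a$ with $m_1$ basic variables among them, so $(x,s)$ is a vertex of $Q$; meanwhile $(0,y)$ with the basic $s$ satisfies $s+by=c_b$, $By=c_B$, so for $t:=c_b-s$ the vector $(0,y)$ is a vertex of $R(t)$ — this is Category~3. The symmetric placement in the $y$-block gives Category~2 with $t:=c_a-s$. I would also remark that these three cases are mutually exclusive and exhaustive, and check the nondegeneracy/vertex claims: e.g. that $(x,s)$ really is a \emph{vertex} (not merely a feasible point) of $Q$ follows because the relevant basis columns are a subset of a linearly independent set, hence linearly independent, and simplicity of $Q$ rules out a smaller-support feasible point — exactly the argument used in Lemma~\ref{lift_lem}.

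The main obstacle I expect is making the "exactly one extra basic variable beyond the minimal split, and it cannot be $s$ alone" bookkeeping airtight in the presence of the shared column $e$ whose support meets \emph{both} the $Q$-part and the $R$-part. The subtlety is that the constraint matrices of $Q(t)$ and $R(t)$ include the linking row, so one has to be careful that "$(x,0)$ is a vertex of $Q(t)$" is the right statement in Category~2 (the $x$-block carries only $m_1-1$ basic variables there, which is exactly a vertex of $Q(t)=\{x:\bar A x=(c_A,t)^\top, x\ge0\}$ once $t=c_a-s$ is the leftover), versus "$(x,s)$ is a vertex of $Q$" in Category~3 (where the $x$-block carries $m_1$ basic variables and $s$ absorbs no slack from $Q$'s side). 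I would handle this by always translating "which columns of $S(\bar A,\bar B)$ are basic" into "which columns of the relevant sub-matrix ($\bar A$, $\bar A$ with modified RHS, $\bar B$, or $\bar B$ with modified RHS) are basic," and invoking simplicity of $Q$, $R$ to upgrade feasible points to vertices, mirroring the face argument of Lemma~\ref{lift_lem}.
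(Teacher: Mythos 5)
Your overall strategy is the same as the paper's: a vertex of the simple $\mathcal{S}$ has exactly $m_1+m_2$ nonzero basic variables, the $x$- and $y$-block columns can contribute at most $m_1$ and $m_2$ independent columns while simplicity of $Q$ and $R$ forces at least $m_1-1$ and $m_2-1$, and one then splits on whether $s$ is basic. Your treatment of Category 1 ($s$ nonbasic, splits forced to $(m_1,0,m_2)$) is essentially right. However, the way you attach the two basic-$s$ splits to Categories 2 and 3 is reversed, and the error stems from a support miscount. If the extra basic variable lands in the $x$-block, the split is $(m_1,1,m_2-1)$, so the point $(x,s)\in Q$ has $m_1+1$ nonzero coordinates — you wrote ``with $m_1$ basic variables among them,'' forgetting that the basic $s$ itself contributes to the support of $(x,s)$. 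Its support then indexes $m_1+1$ columns of $\bar{A}\in\R^{m_1\times n_1}$, which are necessarily dependent, so $(x,s)$ is \emph{not} a vertex of $Q$ in this case. The correct reading of this split is Category 2: $(s,y)$ has support $1+(m_2-1)=m_2$ and is a vertex of $R$ (independence of the shared column of $\bar{B}$ together with the chosen $y$-columns follows from independence of the big basis, using that the $m_1$ basic $x$-columns span the first $m_1$ row-coordinates), while $(x,0)$ has support $m_1$ and is a vertex of $Q(t)$ with $t=c_a-s$. Symmetrically, the split $(m_1-1,1,m_2)$ is Category 3: there $(x,s)$ has support $m_1$ and is a vertex of $Q$, and $(0,y)$ is a vertex of $R(t)$ with $t=c_b-s$; your assignment of that split to Category 2 fails because $(s,y)$ would have $m_2+1$ nonzero coordinates and cannot be a vertex of $R$.

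This is not merely a relabeling of which case is called ``2'' or ``3'': the implications you assert (``extra in the $x$-block $\Rightarrow$ $(x,s)$ is a vertex of $Q$,'' and the analogous one for the $y$-block) are false as stated, and the same confusion reappears in your closing paragraph (``in Category 3 the $x$-block carries $m_1$ basic variables''). The correspondence the paper actually uses downstream — split $(m_1-1,1,m_2)$ gives vertices of $Q$ (lead-in to Section \ref{sec:series_bound}, Theorem \ref{series_corr_diam}) and split $(m_1,1,m_2-1)$ gives vertices of $R$ (Lemma \ref{crossover_lemma}) — is the opposite of yours, so your version of the proposition would break those arguments. With the two basic-$s$ cases swapped and the support counts corrected, your proof goes through and coincides with the paper's counting argument.
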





The above categorization of the vertices captures the possible basis splits in the respective blocks and relates the coordinate vectors to vertices of the original polyhedra. As we did in Section \ref{PCSec}, we will establish diameter bounds by breaking the problem into subcases based on these categories.

\subsection{Lattice-Type Diameter Bound} \label{lattice_bound}

In this section, we establish a diameter bound for the series-connection polyhedron that relies on the difference between the maximum and minimum values of the shared variable $s.$ Such bounds typically arise in the studies of lattice polytopes or general polyhedra in combinatorial optimization; see, e.g., \cite{b-13,del2016diameter,deza2018improved}. The state-of-the-art bounds for the diameter of a lattice polytope in $[0, k]^d$ are $kd$ for $k=1$ \cite{kleinschmidt1992diameter}, $\lfloor (kd - (1/2)d) \rfloor$ when $k = 2$ \cite{del2016diameter}, and $kd -  \lceil(2/3)d \rceil$ when $k \geq 3$ \cite{deza2018improved}.

In what follows, we will make reference to the maximum and minimum values that $s$ can take on in $Q, R$, and $\mathcal{S}.$ We set $s_{\min}^Q := \min \{ s : (x, s) \in Q \}$ and $s_{\max}^Q := \max \{ s : (x, s) \in Q \}.$ We define $s_{\min}^R$ and $s_{\max}^R$ analogously. 
We show that the diameter of $\mathcal{S}$ is bounded above by $m_1 + m_2 + s_{\text{diff}}$ where:
\[
s_{\text{diff}} := \max\{s : (x, s, y) \in \mathcal{S} \} - \min\{s : (x, s, y) \in \mathcal{S}\}.
\]
We note that the range of values $s$ that may take decreases following the series-connection operation.

\begin{lem}\label{lem:srange}
Let $(x, s, y) \in \mathcal{S}$. Then $
    \max \{ s_{\min}^Q, s_{\min}^R \}
    \leq s \leq 
    \min \{ s_{\max}^Q, s_{\max}^R  \}. 
$
Moreover, these bounds are tight.
\end{lem}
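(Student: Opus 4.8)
The plan is to prove the inequality $\max\{s_{\min}^Q, s_{\min}^R\} \leq s \leq \min\{s_{\max}^Q, s_{\max}^R\}$ directly from the structure of the constraint matrix $S(\bar A,\bar B)$, and then to verify tightness by explicitly exhibiting points of $\mathcal S$ that achieve the bounds. For the inequality, the key observation is that if $(x,s,y)\in\mathcal S$, then reading off the rows of $S(\bar A,\bar B)$ shows that $(x,s)$ satisfies $A x = c_A$, $a x + s = c_a$, and $x,s\geq 0$; that is, $(x,s)\in Q$. Hence $s_{\min}^Q \leq s \leq s_{\max}^Q$. Symmetrically, the last two block-rows give $s + b y = c_b$, $B y = c_B$, and $s,y\geq 0$, so $(s,y)\in R$, whence $s_{\min}^R \leq s \leq s_{\max}^R$. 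Combining the two chains of inequalities yields the claim.

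For tightness, I would argue that every feasible value of $s$ in $\mathcal S$ is in fact realized, so the range of $s$ over $\mathcal S$ equals exactly $[\max\{s_{\min}^Q,s_{\min}^R\},\,\min\{s_{\max}^Q,s_{\max}^R\}]$ (intersected with the feasible set, which is an interval since it is the projection of a polyhedron onto one coordinate). Concretely, fix a value $s^*$ in this interval. Since $s^*$ lies between $s_{\min}^Q$ and $s_{\max}^Q$ and the set $\{s:(x,s)\in Q\}$ is a (closed) interval, there is $x^*$ with $(x^*,s^*)\in Q$; likewise there is $y^*$ with $(s^*,y^*)\in R$. Then checking the four block-rows of $S(\bar A,\bar B)$ shows $(x^*,s^*,y^*)\in\mathcal S$, so $s^*$ is attained in $\mathcal S$. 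Applying this at the two endpoints $s^* = \max\{s_{\min}^Q,s_{\min}^R\}$ and $s^* = \min\{s_{\max}^Q,s_{\max}^R\}$ (and noting the interval is nonempty because $\mathcal S$ is assumed nonempty, so by Lemma~\ref{lem:srange}'s inequality the endpoints are ordered correctly) shows the bounds cannot be improved.

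One subtlety worth addressing explicitly is the possibility that $s_{\max}^Q$, $s_{\max}^R$, or the upper end of the $\mathcal S$-range is $+\infty$; in that case the statement "these bounds are tight" should be read as: the infimum of $s$ over $\mathcal S$ equals $\max\{s_{\min}^Q,s_{\min}^R\}$ and is attained, and $s$ is unbounded above on $\mathcal S$ exactly when it is unbounded above on both $Q$ and $R$. The argument above handles this uniformly: for any finite $s^*$ below both suprema, the interpolation produces a point of $\mathcal S$, so no finite upper bound smaller than $\min\{s_{\max}^Q,s_{\max}^R\}$ can hold.

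The main obstacle — really the only thing beyond bookkeeping — is justifying that a value $s^*$ lying between $s_{\min}^Q$ and $s_{\max}^Q$ is genuinely achieved by some $(x^*,s^*)\in Q$ (and similarly for $R$). This is exactly the statement that the image of the polyhedron $Q$ under the linear projection $(x,s)\mapsto s$ is a convex subset of $\R$, hence an interval; it is elementary but should be invoked cleanly rather than glossed over, and it is what makes the "moreover" clause true rather than merely an assertion about the two extreme values in isolation.
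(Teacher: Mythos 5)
Your proposal is correct and follows essentially the same route as the paper: the inequalities come from observing that the block rows of $S(\bar{A},\bar{B})$ force $(x,s)\in Q$ and $(s,y)\in R$, and tightness comes from reassembling a point of $\mathcal{S}$ using convexity of the attainable $s$-values. The paper's tightness argument is exactly your ``projection is an interval'' fact made explicit (a convex combination of two points of $R$ achieving $s_{\min}^R$ and $s_{\max}^R$), so your version differs only in stating it for every $s^*$ in the overlap and in spelling out the unbounded case.
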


\begin{proof}
Let $(x, s, y) \in \mathcal{S}.$ Then, $(x, s) \in Q$ and $(s, y) \in R$. Therefore,
\[
    s_{\min}^Q \leq s \leq s_{\max}^Q \text{ and } s_{\min}^R \leq s \leq s_{\max}^R.
\]
Thus, we conclude 
\[
    \max \{ s_{\min}^Q, s_{\min}^R \}
    \leq s \leq 
    \min \{ s_{\max}^Q, s_{\max}^R  \}.
\]
For tightness, we assume, without loss of generality, that $s_{\max}^Q \leq s_{\max}^R.$ By definition of $s_{\max}^Q$, there exists an $x$ such that $(x, s_{\max}^Q) \in Q.$ Additionally, there exist $y^1, y^2$ such that $(s_{\min}^R, y^1)$, $(s_{\max}^R, y^2) \in R.$ As $s_{\min}^R \leq s_{\max}^Q \leq s_{\max}^R$, we can find $\lambda \in [0, \, 1]$ such that $s_{\max}^Q =\lambda s_{\min}^R + (1 -\lambda)s_{\max}^R.$ Setting $y = \lambda y^1 + (1 - \lambda)y^2$, it follows by convexity that $(s_{\max}^Q, y) \in R.$ Therefore, $(x, s_{\max}^Q, y) \in \mathcal{S}.$ A similar argument shows that the bound $\max \{ s_{\min}^Q, s_{\min}^R \}$ is tight. 
\qed
\end{proof}

Our technique for Theorem \ref{SQR_diam} relies on concatenating three walks to form a walk between a pair of vertices. Two of the portions connect the start and end vertices, respectively, to vertices where $s$ is maximal or minimal. We show that such a walk exists and has length at most $s_{\rm diff}$ for any integral polyhedron $S$.

\begin{lem} \label{walklem}
Suppose that $S$ is an integral polyhedron with variable $s_{\min} \leq s \leq s_{\max}.$ Let $(x, s, y) \in S$ be a vertex. Then $(x, s, y)$ is at most a distance $s_{\rm{diff}}$ from a vertex where $s$ is minimal and from a vertex where $s$ is maximal.
\end{lem}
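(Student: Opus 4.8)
The plan is to walk the shared variable $s$ monotonically toward an extreme value, changing $s$ by at least one unit per step thanks to integrality, and to argue that such a monotone walk exists and visits a bounded number of vertices. First I would observe that it suffices to handle the case of reaching a vertex where $s$ is minimal; the maximal case is symmetric (replace $s$ by $s_{\max}-s$, or just mirror the argument). So fix a vertex $(x,s,y)\in S$ and suppose $s > s_{\min}$. I want to produce an edge of $S$ leaving $(x,s,y)$ along which $s$ strictly decreases. Since $(x,s,y)$ is a vertex of a simple polyhedron, it has $n_1+n_2-1 - (m_1+m_2)$ incident edges, each obtained by letting one nonbasic variable enter the basis; the value of $s$ changes linearly along each such edge (by the amount the entering variable's reduced contribution to the $s$-coordinate dictates), so along each edge $s$ is non-increasing, constant, or non-decreasing. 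The key claim is: if $s>s_{\min}$ then at least one incident edge has $s$ strictly decreasing on it. This follows because $(x,s,y)$ is not a minimizer of the linear functional $s$ over $S$, hence by LP theory some edge direction at this vertex decreases $s$; that edge is either bounded (ending at an adjacent vertex with strictly smaller $s$, since by integrality the drop is at least $1$) or unbounded in the direction of decreasing $s$, which is impossible because $s\geq s_{\min}$ on all of $S$. Thus we get an adjacent vertex $(x',s',y')$ with $s' \leq s-1$.

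Next I would iterate: repeatedly apply the claim, each time strictly decreasing the integer-valued coordinate $s$ by at least $1$, until we reach a vertex where $s = s_{\min}$. The number of steps is at most $s - s_{\min} \leq s_{\max} - s_{\min} = s_{\rm diff}$, since each step decreases $\lfloor s\rfloor$ (in fact $s$ itself, by integrality of all vertices visited) by at least one integer unit and $s$ stays within $[s_{\min}, s_{\max}]$ throughout. Hence $(x,s,y)$ is within distance $s_{\rm diff}$ of a vertex where $s$ is minimal. The same argument applied to the functional $-s$ (equivalently, using $s_{\max}-s\geq 0$ as the bounding inequality in place of $s - s_{\min}\geq 0$) shows $(x,s,y)$ is within distance $s_{\rm diff}$ of a vertex where $s$ is maximal.

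I expect the main obstacle to be making the "$s$ strictly decreases along some incident edge" step airtight, including the degenerate subtleties: even though $S$ is assumed simple here, one must be careful that an edge along which $s$ is strictly decreasing really terminates at a vertex of strictly smaller $s$ rather than stalling, and that the termination happens at an integral vertex so the decrease is at least $1$. The clean way to phrase this is via the simplex method: starting from $(x,s,y)$, run the simplex method on $\min\{s : (x,s,y)\in S\}$ using an anti-cycling rule; since $s$ is bounded below, it terminates at an optimal vertex with $s=s_{\min}$, and because the polytope restricted to the edges actually traversed has no degeneracy issues under simplicity, each pivot that changes the objective changes $s$ by an integer amount $\geq 1$ while pivots that do not change $s$ can be avoided (a simple polyhedron has no such pivots between distinct vertices). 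Counting only the $s$-decreasing pivots bounds the walk length by $s_{\rm diff}$. One should also note explicitly that the intermediate vertices are vertices of the integral polyhedron $S$, hence have integral $s$-coordinates, which is what licenses the per-step decrease of at least one unit.
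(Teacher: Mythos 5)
Your proposal is correct and follows essentially the same route as the paper: initialize the simplex method at the given vertex for $\min\{s\}$ (resp.\ $\max\{s\}$), note that by integrality each improving pivot changes $s$ by at least one, and bound the number of steps by $s-s_{\min}$ (resp.\ $s_{\max}-s$), each at most $s_{\rm diff}$. Your extra care about improving edges, unboundedness, and degenerate pivots only makes explicit what the paper leaves implicit.
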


\begin{proof}
Let $(x', s', y')$ be a vertex. Consider the linear programs $\min \{ s : (x, s, y) \in S \}$ and $\max \{ s : (x, s, y) \in P \}.$ 
The simplex method initialized at $(x',s',y')$ can be used to construct a walk to an optimal vertex. The value of $s$ must improve by at least one for each (non-degenerate) step because $S$ is integral and the simplex method strictly improves the objective function. Therefore, $\min \{ s : (x, s, y) \in S \}$ is solved in at most $s' - s_{\min}$ steps, and $\max \{ s : (x, s, y) \in S \}$ is solved in at most $s_{\max} - s'$ steps. In both cases, the number of steps is bounded by $s_{\rm{diff}}.$  
\qed
\end{proof} 

Lemma \ref{walklem} is the key ingredient to devise a bound of $\mathcal{S}$ in terms of $s_{\rm diff}.$ 

\begin{thm} \label{SQR_diam}
Let $Q$ and $R$ be integral polyhedra that satisfy the non-revisiting conjecture. Let $\mathcal{S}$ be the series-connection polyhedron for $Q$ and $R$ and $s_{\min} := \min \{s : (x, s, y) \in \mathcal{S}\}$.
If $(x^1, s^1, y^1)$, $(x^2, s^2, y^2) \in \mathcal{S}$ are vertices, then the distance between $(x^1, s^1, y^1)$ and $(x^2, s^2, y^2)$ is at most $m_1 + m_2 + 2s_{\rm{diff}}.$ If $\sup \{s : (x, s, y) \in \mathcal{S}\} < \infty$, then the diameter of $\mathcal{S}$ is bounded by $m_1 + m_2 + s_{\text{diff}}$.
\end{thm}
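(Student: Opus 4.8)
The plan is to build a walk between two arbitrary vertices $(x^1,s^1,y^1)$ and $(x^2,s^2,y^2)$ of $\mathcal{S}$ by concatenating three pieces: a walk from $(x^1,s^1,y^1)$ to a vertex with $s$ minimal, a walk within the facet $\{s = s_{\min}\}$ between the two ``landing'' vertices, and a walk from a vertex with $s$ minimal back to $(x^2,s^2,y^2)$. First I would invoke Lemma \ref{walklem} twice to obtain the first and third pieces, each of length at most $s_{\rm diff}$, giving total contribution $2 s_{\rm diff}$. The key point for the middle piece is that once $s = s_{\min}$, the constraint matrix restricted to this facet decouples: fixing $s$ to a constant forces the $x$-block to lie in a translate of $Q$'s feasible region and the $y$-block to lie in a translate of $R$'s feasible region, so the facet $\{s = s_{\min}\}$ is (combinatorially) a product-like polyhedron governed by the constraint matrices $\bar A$ and $\bar B$. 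Using the categorization in Proposition \ref{seriesVerClass} and Lemma \ref{degeneracy_lem} together with the non-revisiting hypothesis on $Q$ and $R$, the middle walk has length at most $(m_1 - 1) + (m_2 - 1) + (\text{a small constant for linking})$; tracking constants carefully should yield the clean bound $m_1 + m_2$ for the middle piece, hence $m_1 + m_2 + 2 s_{\rm diff}$ overall, proving the first assertion.

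For the second assertion, under the additional assumption $s_{\max} < \infty$, I would refine the construction so that only \emph{one} of the outer pieces is needed. The idea is: if $s^1 \le s^2$ (say), route from $(x^1,s^1,y^1)$ down (or up) to meet the walk derived from $(x^2,s^2,y^2)$ at a common $s$-value, rather than forcing both endpoints all the way to $s_{\min}$. More precisely, one walks $(x^1,s^1,y^1)$ to a vertex with $s = s_{\min}$ in at most $s^1 - s_{\min}$ steps and $(x^2,s^2,y^2)$ to a vertex with $s = s_{\min}$ in at most $s^2 - s_{\min}$ steps; the total of the two outer pieces is then $s^1 + s^2 - 2 s_{\min}$, which over all pairs of vertices is at most $2(s_{\max} - s_{\min}) = 2 s_{\rm diff}$ — but the \emph{diameter}, i.e.\ the max over \emph{all} pairs, can be organized so that one always descends from the higher endpoint to $s_{\min}$ only, because the lower endpoint's descent distance and the higher endpoint's descent distance together with the monotone structure of the simplex descent on $s$ telescope. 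The cleanest route: show that any vertex is within $s_{\rm diff}$ of the facet $\{s = s_{\min}\}$ by Lemma \ref{walklem}, and that any two vertices \emph{on} that facet are within $m_1 + m_2 - s_{\rm diff}$ — no, more carefully, one wants to exploit that a vertex with large $s$ is correspondingly close (within $s_{\max} - s$) to the top facet and far (within $s - s_{\min}$) from the bottom; balancing, the worst case over pairs is at most $m_1 + m_2 + s_{\rm diff}$.

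The main obstacle I anticipate is making the middle-piece bound precise: the facet $\{s = s_{\min}\}$ of $\mathcal{S}$ is not literally $Q \times R$ — it is cut out by $Ax = c_A$, $ax = c_a - s_{\min}$, $by = c_b - s_{\min}$, $By = c_B$ with $x,y \ge 0$, which is a Cartesian product of a face-type polyhedron with constraint matrix $\bar A$ (right-hand side shifted) and one with constraint matrix $\bar B$ (right-hand side shifted), i.e.\ exactly $Q(c_a - s_{\min}) \times R(c_b - s_{\min})$ in the notation introduced earlier. Since $\operatorname{diam}(\bar A) \le m_1$ and $\operatorname{diam}(\bar B) \le m_2$ by the Hirsch hypothesis on $Q, R$ for all right-hand sides (Proposition \ref{nonrevis_hirsch}, Lemma \ref{degeneracy_lem}), the diameter of this product is at most $m_1 + m_2$, and a walk of this length stays on the facet and is non-revisiting — precisely the property highlighted in the discussion preceding Lemma \ref{degeneracy_lem}. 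Care is needed to confirm that the vertices reached by the outer descents land in the right categories of Proposition \ref{seriesVerClass} (so that they genuinely are product-vertices of $Q(t) \times R(t')$ on the facet), and that one does not double-count the single step in which $s$ first hits $s_{\min}$; with the bookkeeping handled, the two claimed bounds $m_1 + m_2 + 2 s_{\rm diff}$ and $m_1 + m_2 + s_{\rm diff}$ follow.
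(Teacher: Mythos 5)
Your proposal is correct and follows essentially the same route as the paper: Lemma \ref{walklem} (integrality forces each $s$-changing step to move $s$ by at least $1$) supplies the outer pieces; the face of $\mathcal{S}$ on which $s$ is extreme decomposes as a Cartesian product whose diameter is at most $m_1+m_2$ under the non-revisiting/Hirsch-for-all-right-hand-sides hypothesis (via Lemma \ref{degeneracy_lem}), supplying the middle piece and hence $m_1+m_2+2s_{\rm diff}$; and for the sharper bound one routes \emph{both} endpoints to whichever extreme face ($s=s_{\min}$ or $s=s_{\max}$) is jointly cheaper, which costs at most $s_{\rm diff}$ because the two joint costs sum to $2s_{\rm diff}$ -- exactly the paper's ``combined $s_{\rm diff}$ steps'' argument. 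The only tidying needed is to discard the ``meet at a common intermediate $s$-value''/``telescoping'' detour (the product-structure connection is only available on an extreme face, so both endpoints must land on the \emph{same} one) and to note that this face is the product of polyhedra whose constraint matrices are $\bar{A}$ and $\bar{B}$ with the shared column deleted (which is precisely what Lemma \ref{degeneracy_lem} handles), not literally $Q(c_a-s_{\min})\times R(c_b-s_{\min})$.
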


\begin{proof}
We assume that $\mathcal{S}$ is simple.
First, we show that if $(x^1, s^1, y^1), (x^2, s^2, y^2) \in \mathcal{S}$ are vertices where $s^1 = s^2$ and $s^1, s^2$ are both maximal or both minimal then the distance between $(x^1, s^1, y^1)$ and $(x^2, s^2, y^2)$ is at most $m_1 + m_2.$
Let $(x^1, s^1, y^1), (x^2, s^2, y^2)$ be such vertices. We assume that $(x^1, s^1, y^1) \neq (x^2, s^2, y^2)$, otherwise the distance between $(x^1, s^1, y^1)$ and $(x^2, s^2, y^2)$ is zero. It follows that
\[
    (x^1, y^1), (x^2, y^2) \in \mathcal{S}' := \left\{ (x, y) \in \R^{n_1 + n_2 - 2} : 
    \begin{bmatrix}
        A & 0 \\
        a & 0 \\
        0 & b \\
        0 & B
    \end{bmatrix}
    \begin{bmatrix}
    x \\
    y 
    \end{bmatrix}
    =
    \begin{bmatrix}
    c_A \\
    c_a - s^1\\
    c_b - s^1\\
    c_B
    \end{bmatrix},
    (x, y) \geq 0
    \right\}
\]
and $(x^1, y^1), (x^2, y^2)$ are vertices of $\mathcal{S}'$. Further, $\mathcal{S}'$ is a face of $\mathcal{S}$, because $s$ is at an extreme value. Therefore, any path in $\mathcal{S}'$ can be realized as a path in $\mathcal{S}.$ We note the constraint matrix of $\mathcal{S}'$ is the $1$-sum of $\begin{bmatrix} A \\ a \end{bmatrix}$ and $\begin{bmatrix} b \\ B \end{bmatrix}.$ Thus, the diameter is at most $d(\begin{bmatrix} A \\ a \end{bmatrix}) + d(\begin{bmatrix} b \\ B \end{bmatrix}).$ When nonrevisiting is satisfied, the bound reduces to $m_1 + m_2$. 

Let $(x^1, s^1, y^1),$ $(x^2, s^2, y^2) \in \mathcal{S}$ be vertices. Now, we establish that the distance between $(x^1, s^1, y^1)$ and $(x^2, s^2, y^2)$ is at most $m_1 + m_2 + 2s_{\rm diff}$. By integrality of $\mathcal{S}$, any step which decreases the value of $s$ will decrease the value of $s$ by at least $1$. Therefore, the vertex $(x^1, s^1, y^1)$ is at most $s^1$ steps from a vertex where $s$ is minimal. Similarly, $(x^2, s^2, y^2)$ is at most $s^2$ steps from a vertex where $s$ is minimal. Thus, the distance between $(x^1, s^1, y^1), (x^2, s^2, y^2)$ is at most $m_1 + m_2 + s^1 + s^2 \leq m_1 + m_2 + 2s_{\rm{diff}}.$ 

We now suppose that $\sup \{s : (x, s, y) \in \mathcal{S}\} < \infty$.
By Lemma \ref{walklem}, it follows that $(x^1, s^1, y^1)$ is at most a distance $s^1 - s_{\min}$ from a vertex where $s$ is minimal and at most a distance $s_{\max} - s^1$ from a vertex where $s$ is maximal.
For $(x^2, s^2, y^2)$ there are similar bounds ($s^2 - s_{\min}$ and $s_{\max} - s^2$). It can be shown that $(x^1, s^1, y^1)$ and $(x^2, s^2, y^2)$ are at most a combined $s_{\rm{diff}}$ steps from vertices where $s$ is either maximal or minimal. Thus, the distance between $(x^1, s^1, y^1), (x^2, s^2, y^2)$ is at most $m_1 + m_2 + s_{\rm{diff}}$. 
\qed
\end{proof}


We conclude with a brief remark that Theorem \ref{SQR_diam} implies that if $Q$ and $R$ satisfy the Hirsch bound for all right-hand sides, then the diameter of $\mathcal{S}$ does not exceed Hirsch by more than $s_{\rm{diff}}$. This result follows because the Hirsch bound for $\mathcal{S}$ is $m_1 + m_2$ when $\mathcal{S}$ is simple.


\subsection{A Quadratic Bound} \label{sec:series_bound}

In the previous subsection, the diameter of $\mathcal{S}$ was bounded using the quantity $s_{\rm{diff}}.$ However, a scaling of a polyhedron by a factor of $\lambda>0$ would also scale $s_{\rm{diff}}$ with $\lambda$, even though the diameter does not change. In this subsection, we establish a bound that does not rely on $s_{\rm{diff}}.$  

Our technique ``corrects'' a step that failed to lift. Suppose that $(x^0, s^0, y^0), (x', s', y') \in \mathcal{S}$ are vertices with basis split $(m_1-1, 1, m_2).$ By Proposition \ref{seriesVerClass}, $(x^0, s^0), (x', s')$ are vertices of $Q$. Thus, there exists a walk between $(x^0, s^0), (x^1, s^1), \ldots, (x^k, s^k) = (x', s')$ in $Q$. If the entire walk lifts successfully to $\mathcal{S}$, then constructing a diameter bound that is linear in $d(\bar{A})$ and $d(\bar{B})$ is straightforward with our previous techniques. If a step $(x^i, s^i)$ to $(x^{i+1}, s^{i+1})$ does not lift successfully, meaning $(x^{i+1}, s^{i+1}, y^0)$ is infeasible, then an additional $d_b(R)$ (Definition \ref{def:boundeddiam}) steps may be needed to reach a vertex $(x^{i+1}, s^{i+1}, y)$ for some $y$. After this ``correction'', we proceed by lifting the step $(x^{i+1}, s^{i+1})$ to $(x^{i+2}, s^{i+2})$ and using an additional $d_b(R)$ steps to correct, if needed. Lemmas \ref{correction_lemma} and \ref{crossover_lemma} make these arguments rigorous. Theorem \ref{series_corr_diam} is our diameter bound. 


\begin{defi}\label{def:boundeddiam}
Let $P$ be a polyhedron and let $(x, s^0) \in P$ be a vertex. The $s_{\max}$-bounded distance of $(x, s^0)$ is the minimal number of steps to walk from $(x, s^0)$ to a vertex with $s$ maximal where, for every vertex $(x^i, s^i)$ in the walk, $s^i > s^0.$ The $s_{\min}$-bounded distance of $v$ is defined similarly for a vertex where $s$ is minimal. The $s$-bounded distance of $v, d_b(v)$ is the maximum of the two distances. The $s$-bounded diameter of $P$ is $d_b(P) := \max \{d_b(v) : v \in P, v \text{ is a vertex} \}.$
\end{defi}

In what follows, we demonstrate that the diameter of $\mathcal{S}$ is bounded above by a function that is quadratic in the diameter of one of the original polyhedra and the $s$-bounded diameter of the other. 
The $s$-bounded diameter represents the weakest requirement for the construction of the bound. A more restrictive formulation is the well-studied \textit{monotone diameter}.

The $c$-monotone diameter, for a linear functional $c$, is the maximum length of a shortest path between an arbitrary vertex and a vertex that is optimal with respect to $c$ with the restriction that the value $c^Tx$ improves with each step on the path. This diameter is therefore intimately tied to the simplex method. The monotone diameter is the maximum over all choices of $c$. The $s$-bounded diameter of $\mathcal{S}$ is bounded above by the $c$-monotone diameter of $\mathcal{S}$ which is bounded above by the monotone diameter of $\mathcal{S}$.



For brevity, we overload the notation $d_b(\cdot)$ by defining $d_b(M)$ for a matrix $M$. Here, $d_b(M)$ represents the greatest $s$-bounded diameter of a polyhedron with constraint matrix $M$. Lemma \ref{correction_lemma} shows that the $s$-bounded diameter can be used to bound the distance between two particular vertices of $\mathcal{S}.$

\begin{lem}\label{correction_lemma}
If $(x^0, s^0), (x', s') \in Q$ are adjacent vertices such that there exist vertices $(x^0, s^0, y^0)$, $(x', s', y') \in \mathcal{S}$, then there exists a $y$ such that $(x', s', y) \in \mathcal{S}$ is a vertex, and the distance between $(x^0, s^0, y^0) $ and $(x', s', y)$ is at most $d_b(R) + 1.$
\end{lem}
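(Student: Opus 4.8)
The plan is to start from the vertex $(x^0, s^0, y^0) \in \mathcal{S}$, walk along the single edge of $\mathcal{S}$ that corresponds to the edge $(x^0,s^0)\text{--}(x',s')$ of $Q$ (if it lifts), and if it fails to lift, to first move $y^0$ to a vertex of $R(c_b - s^0)$ where the shared coordinate is at an extreme, then perform the lift, using the $s$-bounded diameter of $R$ to control the detour length. First I would record what the vertices $(x^0, s^0, y^0)$ and $(x', s', y')$ look like: since both have $s$ basic and $(x^0,s^0),(x',s')\in Q$, they fall under Category 3 of Proposition \ref{seriesVerClass}, so $(x^0,s^0)$ is a vertex of $Q$ and, for $t_0 := c_b - s^0$, the vector $(0,y^0)$ is a vertex of $R(t_0)$; similarly $(0,y')$ is a vertex of $R(c_b - s')$. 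The point of the detour is to land on a vertex $(s^0, y)$ of $R$ (equivalently $(0,y)$ of $R(t_0)$) for which increasing $s$ to $s'$ — the change forced by walking $x^0 \to x'$ in $Q$ — keeps the $y$-block feasible, i.e. for which the edge of $\mathcal{S}$ over the $Q$-edge actually exists.

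Next I would make the case split. \textbf{Case 1:} the step already lifts, i.e. $(x', s', y^0) \in \mathcal{S}$ and is a vertex; then take $y = y^0$ and the distance is at most $1$, well within $d_b(R)+1$. \textbf{Case 2:} it does not lift, meaning raising (or lowering) the shared coordinate from $s^0$ to $s'$ while holding the $y$-block fixed at $y^0$ leaves feasibility. Without loss of generality suppose $s' > s^0$ (the case $s' < s^0$ is symmetric with $s_{\min}$ in place of $s_{\max}$). Then, viewing $y^0$ as the point $(s^0, y^0)$ of $R$, I walk in $R$ from $(s^0, y^0)$ toward a vertex where the shared variable is \emph{maximal}, along a path on which the shared coordinate is strictly greater than $s^0$ at every intermediate vertex; by Definition \ref{def:boundeddiam} such a path exists with length at most $d_b(R) \le d_b(\bar B)$. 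Each such step of $R$, with the $x$-block held at $x^0$ and the shared coordinate $s$ moving up, lifts to a step of $\mathcal{S}$ of Category 3 type — here I use simplicity of $\mathcal{S}$ exactly as in the proof of Lemma \ref{nbnb1}, so that the intermediate points with $x^0$ fixed, $s$ basic, $(0,y)$ a vertex of $R(c_b - s)$ are genuine vertices, not degenerate ones. Having reached a vertex $(x^0, s_{\max}^R, y^*)$ of $\mathcal{S}$ (or some vertex where $s$ equals $s_{\max}^R$, which by Lemma \ref{lem:srange} is at least $s'$), the step $x^0 \to x'$ now lifts: the $y$-block stays at $y^*$, the shared coordinate decreases from $s_{\max}^R$ to $s'$, and this is feasible because $(s', y^*)\in R$ by convexity along the $R$-segment from $(s^0,y^0)$ through $(s_{\max}^R, y^*)$ — giving the desired $y = y^*$ with $(x', s', y) \in \mathcal{S}$ a vertex. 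The total length is at most $d_b(R) + 1$.

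The main obstacle I anticipate is the bookkeeping that each intermediate point of the detour is an honest vertex of $\mathcal{S}$ and that consecutive ones are adjacent — i.e. that edges of $R$ (with $x$ held fixed) really do lift to edges of $\mathcal{S}$, not to longer sub-walks or to degeneracies. This is where simplicity of $\mathcal{S}$ and the Category-3 description from Proposition \ref{seriesVerClass} do the work, by the same argument pattern already used in Lemma \ref{nbnb1} (a point with the right basis split cannot fail to be a vertex without forcing a degenerate vertex to exist). A secondary subtlety is ensuring the detour goes in the \emph{correct direction} relative to the sign of $s' - s^0$: if $s' > s^0$ one must walk $y$ toward the $s$-maximal end so that afterwards the shared coordinate can be brought \emph{down} to $s'$, and symmetrically toward the $s$-minimal end when $s' < s^0$; this is exactly why $d_b$ is defined as the maximum of the $s_{\max}$-bounded and $s_{\min}$-bounded distances. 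Everything else — the feasibility of $(x', s', y^*)$ via convexity, and the ``$+1$'' for the final lifted step — is routine.
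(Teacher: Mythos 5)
Your construction breaks on the coupling constraints of $\mathcal{S}$: the rows $ax+s=c_a$ and $s+by=c_b$ mean you can never vary $s$ while holding a block literally fixed. In your Case 2 you walk in $R$ from $(s^0,y^0)$ ``with the $x$-block held at $x^0$'' up to a vertex $(x^0,s_{\max}^R,y^*)$; but $ax^0+s^0=c_a$ forces $s=s^0$ whenever $x=x^0$, so every intermediate point you name, and in particular $(x^0,s_{\max}^R,y^*)$, is infeasible (indeed, since $(x^0,s^0)$ is a vertex of the simple $Q$, the only point of $Q$ with $x$-support inside $\operatorname{supp}(x^0)$ is $(x^0,s^0)$ itself). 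The same confusion recurs at the end: ``the $y$-block stays at $y^*$, the shared coordinate decreases from $s_{\max}^R$ to $s'$'' violates $s+by=c_b$, and $(s',y^*)\notin R$ precisely because $s_{\max}^R+by^*=c_b$; convexity along an edge \emph{walk} gives you nothing here. Two further problems: $(s^0,y^0)$ is in general not a vertex of $R$ (it has $m_2+1$ nonzero coordinates), so Definition \ref{def:boundeddiam} does not hand you an $s$-bounded walk starting there; and going all the way to $s_{\max}^R$ and then back down to $s'$ is a detour whose return leg is not accounted for in your $d_b(R)+1$ budget. Even your Case 1 is slightly off: when the step lifts, the new vertex is $(x',s',\bar y^0)$ with $\bar y^0$ having the support of $y^0$, not $y^0$ itself.

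The paper's argument is organized differently and avoids all of this. One first attempts to lift the $Q$-edge keeping only the \emph{support} of $y^0$ fixed; if that fails, the single blocked step lands (this is the ``$+1$'') at a vertex $(x^1,s^1,y^1)$ with basis split $(m_1,1,m_2-1)$, whose $(s,y)$-part \emph{is} a vertex of $R$ and whose $(x^1,s^1)$ lies on the $Q$-edge $\operatorname{conv}\{(x^0,s^0),(x',s')\}$. One then takes the $s$-bounded walk in $R$ from $(s^1,y^1)$ toward the $s$-extreme vertex in the direction of $s'$ (so at most $d_b(R)$ steps), and uses the criterion that a step of $R$ lifts at the current vertex exactly when its $s$-value stays in the interval between $s'$ and $s^0$: the first step that would cross $s'$ is itself blocked precisely at $s=s'$, which is where the desired vertex $(x',s',y)$ appears. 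There is no trip to $s_{\max}^R$ and back; the walk terminates the moment $s$ reaches $s'$. You would need to restructure your proof along these lines — in particular, identify the blocked vertex as the starting vertex of the $R$-walk and prove the interval lifting criterion — before the length bound $d_b(R)+1$ can be justified.
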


\begin{proof}
Throughout this proof, we assume that $\mathcal{S}$ is simple.
First, we suppose that $s^0 = s' = 0.$ It is easy to verify that $(x', s', y^0) \in \mathcal{S}$ is a vertex and $(x^0, s^0, y^0)$ and $(x', s', y^0)$ are adjacent.

Now, we assume that $s^0 \geq s'$ and $s^0 > 0.$ If $(x', s', \bar{y}^0)$ is feasible for some $\bar{y}^0$ with the same zero and nonzero components as $y^0$, then by simplicity $(x', s', \bar{y}^0)$ is a vertex and $(x^0, s^0, y^0)$ and $(x', s', \bar{y}^0)$ are adjacent vertices. If no such $\bar{y}^0$ exists, then lifting the step $(x^0, s^0)$ to $(x', s')$ yields a change in basis split. We call the resulting vertex $(x^1, s^1, y^1)$, which has basis split $(m_1, 1, m_2-1)$. By Proposition \ref{seriesVerClass}, $(s^1, y^1) \in R$. 

The point $(x^1, s^1)$ lies on the edge of $Q$ defined by the convex hull of $(x^0, s^0)$ and $(x', s')$ because it arose from the step from $(x^0, s^0)$ to $(x', s')$. For any adjacent vertex $(s^2, y^2)$ of $(s^1, y^1)$, a step $(s^1, y^1) \rightarrow (s^2, y^2)$ lifts successfully at $(x^1, s^1, y^1)$ if and only if there exists $\bar{x}^1$, with the same zero and nonzero components as $x^1$, such that $(\bar{x}^1, s^2, y^2) \in \mathcal{S}.$ This is possible if and only if $(\bar{x}^1, s^2) \in \operatorname{conv}\{(x^0, s^0), (x', s') \}$; otherwise $(\bar{x}^1, s^2) \notin Q$. Therefore, $s^2 \in [s', s^0]$. Thus, we have the following criterion: a step $(s^1, y^1)$ to $(s^2, y^2)$ lifts at $(x^1, s^1, y^1)$ if and only if $s^2 \in [s', \, s].$ Otherwise, if $s^2 < s'$ the step is blocked at $(x', s', y)$ for some $(0, y) \in R$ and if $s^2 > s$ the step is blocked at $(x^0, s^0, y)$ for some $(0, y) \in R.$

Now, we construct the path. Let $(s^*,y^*)$ be a vertex representative from $\operatorname{argmin} \{ s : (s, y) \in R \}.$ We let $(s^1, y^1), (s^2, y^2), \ldots, (s^k, y^k) = (s^*, y^*)$ denote the shortest path from $(s^1, y^1)$ to $(s^*, y^*)$ satisfying $s^i \leq s^1$ for all $i.$ Existence of this path follows because such a path can be constructed via the simplex method with an improving pivot rule. By Definition \ref{def:boundeddiam}, $k \leq d_b(R)$. Let $j$ be the smallest index such that $s^j > s'$ and $s^{j+1} \leq s'$, which exists as $s^* \leq s' < s^1.$ Now, the sub-path $(s^1, y^1) \rightarrow (s^j, y^j)$ lifts successfully by the aforementioned criterion. However, by the same criterion, the step $(s^j, y^j) \rightarrow (s^{j+1}, y^{j+1})$ is blocked and the step terminates at $(x', s', y)$ for some $y \in \operatorname{conv}\{ y^j, y^{j+1} \}$. Thus, this walk is at most $1 + j \leq 1 + d_b(R)$ steps.
\qed
\end{proof}

An analogous statement holds for adjacent vertices $(s, y), (s', y') \in R$ with bound $d_b(Q) + 1.$ Lemma \ref{correction_lemma} requires the existence of a $y'$ such that $(x', s', y')$ is feasible. To show existence of such a $y'$, it will be helpful to work with a particular basis split. Lemma \ref{crossover_lemma} establishes that every vertex is at most a distance $\max\{ d(\bar{A}), d(\bar{B}) \}$ from a vertex with the `opposite' basis split. 

\begin{lem} \label{crossover_lemma}
Let $\mathcal{S}$ be simple and $(x, s, y) \in \mathcal{S}$ be a vertex with basis split $(m_1, 1, m_2-1).$ If there exists a vertex $(x', s', y') \in \mathcal{S}$ with basis split $(m_1-1, 1, m_2)$ and $s_{\max}^Q < s_{\max}^R$ or $s_{\min}^Q > s_{\min}^R$  then there exists a vertex $(x^c, s^c, y^c) \in \mathcal{S}$ with basis split $(m_1-1, 1, m_2)$ or $(m_1, 0, m_2)$ at most  distance $d(\bar{B})$ from $(x, s, y).$ 
\end{lem}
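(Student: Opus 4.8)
The plan is to exhibit an explicit walk of length at most $d(\bar B)$ starting at $(x,s,y)$ that terminates at a vertex whose basis split is $(m_1-1,1,m_2)$ or $(m_1,0,m_2)$, by moving only in the $y$-block (and possibly in the shared coordinate $s$), so that the $x$-block stays essentially frozen. Since $(x,s,y)$ has basis split $(m_1,1,m_2-1)$, by Proposition \ref{seriesVerClass} the vector $(x,s)$ is a vertex of $Q$ and, for $t:=c_b-s$, the vector $(0,y)$ is a vertex of $R(t)$. The existence of the vertex $(x',s',y')$ with the complementary split $(m_1-1,1,m_2)$, together with the hypothesis $s_{\max}^Q<s_{\max}^R$ or $s_{\min}^Q>s_{\min}^R$, is what guarantees that the shared coordinate has `room to move' in the direction needed: $R$ does not obstruct a perturbation of $s$ that $Q$ can realize, so a walk that increases the number of basic $y$-variables by one is available.

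The key steps, in order, are: (1) Fix the support of $x$. Because $x$ has $m_1$ basic variables and any feasible point must satisfy $Ax=c_A$, while $ax+s=c_a$ pins down $s$ once $x$ is fixed, restricting $\mathcal S$ to the face where the currently-zero $x$-coordinates vanish yields a polyhedron in the variables $(s,y)$ whose constraint matrix is (a row-reduced form of) $\bar B$ — precisely a copy of $R$ (or $R(t)$) sitting as a face of $\mathcal S$. (2) Observe that inside this face, $(s,y)$ is a vertex with $s$ basic but only $m_2-1$ basic $y$-variables, i.e. it is not simple for $\bar B$ unless we move; the complementary vertex $(x',s',y')$ tells us a vertex with $m_2$ basic $y$-variables is reachable in this same face-structure, and the inequality hypothesis rules out the degenerate obstruction where $s$ is frozen at an endpoint of its $R$-range while $Q$ would permit motion. (3) Apply the diameter bound $d(\bar B)$ to this face: a walk of length at most $d(\bar B)$ reaches a vertex of the face where the $y$-block has $m_2$ basic variables. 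Since the $x$-support never grew, this target vertex has an $x$-block with at most $m_1$ basic variables and a $y$-block with $m_2$ basic variables; counting the $m_1+m_2$ total basic variables forces its split to be either $(m_1-1,1,m_2)$ or $(m_1,0,m_2)$, which is exactly the claim. (4) Finally, verify that each step of this walk in the face is a genuine edge step of $\mathcal S$ (the face of a polyhedron has its edges among the edges of the polyhedron), so the distance bound transfers.

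The main obstacle I expect is Step (2): making rigorous why the hypothesis ``$s_{\max}^Q<s_{\max}^R$ or $s_{\min}^Q>s_{\min}^R$'' is exactly what is needed. Without it, one could have $s$ pinned by $R$ at its extreme value while $Q$ is what constrains $s$ on the other side, and then moving within the $y$-face cannot free up an extra basic $y$-variable without first changing $s$ — but changing $s$ would change $t$ and potentially force $x$ to move, breaking the `$x$-frozen' construction. I would handle this by a short case analysis: if the walk toward a vertex with $m_2$ basic $y$-variables must pass through a value of $s$ outside $[s_{\min}^Q,s_{\max}^Q]$, the hypothesis guarantees that this happens only on the side where $R$'s range strictly contains $Q$'s range, so $R(t)$ remains well-defined and nonempty throughout, and Lemma \ref{lem:srange} pins down that the relevant extreme of $s$ in $\mathcal S$ is inherited from $Q$, not from $R$ — meaning the blocking facet encountered is an $x$- or $s$-facet, producing exactly a transition to split $(m_1,0,m_2)$. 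The other routine points — that the restricted face really has $\bar B$ as constraint matrix up to row operations (cf. the $1$-sum discussion and Lemma \ref{lift_lem}-style arguments already used for $\mathcal P$), and that $d(\bar B)$ bounds its diameter by definition of $\operatorname{diam}(\cdot)$ for a matrix — I would treat briefly.
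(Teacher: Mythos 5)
Your overall instinct---freeze the zero coordinates of the $x$-block and move only in the $(s,y)$-part---is in the same spirit as the paper's argument, but your Step (3) has a genuine gap. The face of $\mathcal{S}$ obtained by fixing the non-support $x$-coordinates to zero is \emph{not} a copy of $R$ with constraint matrix $\bar{B}$: since $(x,s)$ has $m_1+1$ nonzero entries, it lies on an edge of $Q$, so on this face the supported $x$-variables vary affinely with $s$, and eliminating them leaves $R$ intersected with bound constraints $\alpha\le s\le\beta$ coming from $x(s)\ge 0$. Under the hypothesis $s_{\max}^Q<s_{\max}^R$ the upper bound genuinely cuts $R$, so the face has facets not of the $\bar{B}$-form and $d(\bar{B})$ does not bound its diameter; your appeal to ``the diameter bound $d(\bar B)$ for this face'' is therefore unjustified. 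Moreover, even granting a diameter bound, you never exhibit a concrete target vertex \emph{inside this face}: the hypothesized vertex $(x',s',y')$ with split $(m_1-1,1,m_2)$ need not have its $x$-support contained in $\operatorname{supp}(x)$, so it gives no reachable destination there. Your own flagged ``main obstacle'' in Step (2) is exactly where the quantitative content is missing, and the sketch you give (well-definedness of $R(t)$, appeal to Lemma \ref{lem:srange}) does not supply it.

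The paper closes this gap differently: from $s_{\max}^Q<s_{\max}^R$ it picks a specific vertex $(s^*,y^*)$ of $R$ with $s^*>s_{\max}^Q$, takes a shortest walk in $R$ (not in the face of $\mathcal{S}$) from $(s,y)$ to $(s^*,y^*)$---of length at most $d(\bar{B})$ because $R$ really does have constraint matrix $\bar{B}$---and lifts it step by step into $\mathcal{S}$. At each lifted step some coordinate of the current vertex leaves the basis; if ever a coordinate of $(x,s)$ leaves, the resulting vertex has split $(m_1-1,1,m_2)$ or $(m_1,0,m_2)$ and you are done within $d(\bar{B})$ steps, while if this never happens the walk terminates at $(\bar{x},s^*,y^*)$ with $\bar{x}$ supported on $\operatorname{supp}(x)$, whence $(\bar{x},s^*)\in Q$ contradicts $s^*>s_{\max}^Q$ (the case $s_{\min}^Q>s_{\min}^R$ is symmetric). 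So the hypothesis is used precisely to furnish the destination vertex driving this blocking-or-contradiction dichotomy; to repair your proposal you would need to replace the face-diameter appeal with this lifting argument (or an equivalent bound on the distance, within the bounded face, to a vertex where an $x$- or $s$-coordinate leaves the basis).
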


\begin{proof}
First, we assume that $s_{\max}^Q < s_{\max}^R.$ This implies that there exists $(s', y') \in R$ be a vertex with $s' > s_{\max}^Q$. By assumption and Proposition \ref{seriesVerClass}, $(s, y)$ and $(s', y')$ are vertices of $R.$ We consider a shortest walk from $(s, y)$ to $(s', y')$ in $R$, which we enumerate as $(s, y) = (s^0, y^0), (s^1, y^1),\ldots, (s^k, y^k)=(s', y').$ As this is a shortest walk, we have $k \leq d(\bar{B})$.
When lifting step $i$, a coordinate of $(s^i, y^i)$ enters the basis and a coordinate of $(x^{i-1}, s^{i-1}, y^{i-1})$ leaves the basis.
If a coordinate of $(x^{i-1}, s^{i-1})$ leaves the basis, then the vertex has basis split $(m_1-1, 1, m_2)$ or $(m_1, 0, m_2).$

Assume a coordinate of $(x, s)$ never left the basis while lifting each step. It follows that the walk terminates at $(\bar{x}, s', y')$ with basis split $(m_1-1, 1, m_2)$, where $\bar{x}$ is a vector with the same zero and nonzero components of $x$. By definition of the series-connection polyhedron, we have $(\bar{x}, s') \in Q$, which contradicts $s' > s_{\max}^Q.$ Thus, at some step of the walk a coordinate of $(x, s)$ must leave the basis. Therefore, $(x, s, y)$ is at most $d(\bar{B})$ steps from a vertex with basis split $(m_1-1, 1, m_2)$  or $(m_1, 0, m_2).$

The case $s_{\min}^Q > s_{\min}^R$ is similar; we find a walk from $(s, y)$ to a vertex of form $(s_{\min}^R, y_{\min}).$ 
\qed
\end{proof}

Lemmas \ref{correction_lemma} and \ref{crossover_lemma}, are crucial tools for the proof of the diameter bound. We will consider the case where $s_{\min}^R \leq s_{\min}^Q$, although, by interchanging the roles of $Q$ and $R$, we can derive a bound when $s_{\min}^Q < s_{\min}^R$, too. 
When either $s_{\max}^R < s_{\min}^Q$ or  $s_{\max}^Q < s_{\min}^R$, $\mathcal{S}$ is empty (see Lemma \ref{lem:srange}). Then, $\mathcal{S}$ satisfies Hirsch vacuously, so we do not examine this situation further. 

Our proof is split into two cases. The first case assumes $s \leq s_{\max}^R$ for each vertex in the walk we are lifting from $Q$. In this case, each intermediate vertex $(x^i, s^i)$ can be lifted to a vertex $(x^i, s^i, y^i) \in \mathcal{S}.$ A bound can be derived from an application of Lemma \ref{correction_lemma}. The second case assumes $s > s_{\max}^R$ for some step. In this case, we adjust the arguments used. A different bound is found depending on the case; therefore, the bound in Theorem \ref{series_corr_diam} is presented as a maximum of two terms.

\begin{thm} \label{series_corr_diam} 
The diameter of the series-connection polyhedron, $\mathcal{S}$, where $s_{\min}^R \leq s_{\min}^Q$ is
at  most the maximum of 
\[
    \{d(\bar{A})(d_b(\bar{B})+1) + d(B) + 2d(\bar{B}), \,
    (d(\bar{A})-1)(d_b(\bar{B})+1) + d(\bar{A}) + 3d(\bar{B}) + 2 \}.
\]
\end{thm}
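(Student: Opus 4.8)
The plan is to fix two arbitrary vertices $(x^1,s^1,y^1)$ and $(x^2,s^2,y^2)$ of $\mathcal{S}$ and build a bounded-length walk between them by routing through vertices with basis split $(m_1-1,1,m_2)$, from which one can lift an entire walk in $Q$ coordinate-by-coordinate, correcting each failed lift via Lemma \ref{correction_lemma}. First I would use Proposition \ref{seriesVerClass} together with Lemma \ref{crossover_lemma} to move each endpoint, in at most $d(\bar{B})$ steps, to a vertex with basis split $(m_1-1,1,m_2)$ (or $(m_1,0,m_2)$, which is on the facet $s=0$ and handled separately as in the proof of Theorem \ref{SQR_diam}); this is where the hypothesis $s_{\min}^R\le s_{\min}^Q$ is consumed, since it guarantees the crossover hypothesis $s_{\min}^Q>s_{\min}^R$ (or we instead push toward the $s$-minimal face). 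Having reduced to two vertices of split $(m_1-1,1,m_2)$, their $(x,s)$-parts are vertices of $Q$, so there is a walk of length at most $d(\bar{A})$ between them in $Q$.

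The core step is to lift this $Q$-walk into $\mathcal{S}$. I would process its $d(\bar{A})$ edges one at a time: at the current vertex $(x^i,s^i,y)\in\mathcal{S}$, attempt to lift the edge $(x^i,s^i)\to(x^{i+1},s^{i+1})$ holding $y$'s support fixed; if it succeeds we advance for free, and if it fails Lemma \ref{correction_lemma} provides a detour of at most $d_b(\bar{B})+1$ steps landing at some $(x^{i+1},s^{i+1},y')\in\mathcal{S}$. Summing over all $d(\bar{A})$ edges gives the main term $d(\bar{A})(d_b(\bar{B})+1)$; adding the two crossover segments ($2d(\bar{B})$) and a final within-face correction to align the $y$-coordinate at the end (bounded by $d(B)$, the diameter of the relevant face of $R$) yields the first term $d(\bar{A})(d_b(\bar{B})+1)+d(B)+2d(\bar{B})$.

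The second term comes from the case flagged in the paragraph before the theorem: when the $Q$-walk forces $s>s_{\max}^R$ at some intermediate vertex, that vertex cannot be lifted to $\mathcal{S}$ at all, so the naive lifting breaks down. Here I would instead split the $Q$-walk at the first such violating edge, lift the prefix as above (now only $d(\bar{A})-1$ genuinely lift-able edges, contributing $(d(\bar{A})-1)(d_b(\bar{B})+1)$), and at the break-point reroute: since $s$ has been driven up to near $s_{\max}^R$, one can cross over to a vertex where the roles reverse and walk the remainder within $R$-type faces, paying an extra $d(\bar{A})+3d(\bar{B})+2$ for the additional crossovers, the tail walk, and boundary corrections. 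Taking the maximum over the two cases gives the stated bound.

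The main obstacle I expect is the bookkeeping in the lifting loop: after a correction step the $y$-support changes, so one must re-verify that Lemma \ref{correction_lemma}'s hypotheses (adjacency in $Q$, existence of \emph{some} feasible lift of the new $(x^{i+1},s^{i+1})$) still hold at the next iteration, and that the corrections do not accumulate extra length beyond $d_b(\bar{B})+1$ per edge — in particular ensuring that a correction for edge $i$ does not have to be partly undone before processing edge $i+1$. Handling the interface between the $s=0$ facet (where $\mathcal{S}$ restricts to a $1$-sum and Proposition \ref{nonrevis_hirsch} / Lemma \ref{degeneracy_lem} give clean bounds) and the $s>0$ region cleanly is the other delicate point, and is presumably why $d(B)$ rather than $d(\bar{B})$ appears in the first term.
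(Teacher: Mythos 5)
Your handling of the first bound is essentially the paper's argument: lift the $Q$-walk between the two $(x,s)$-parts edge by edge, paying at most $d_b(\bar{B})+1$ per edge via Lemma \ref{correction_lemma}, finish with a walk of length $d(B)$ inside the face where the shared variable is fixed (this is exactly why $d(B)$ and not $d(\bar{B})$ appears), and add $2d(\bar{B})$ for moving each endpoint out of the basis split $(m_1,1,m_2-1)$ via Lemma \ref{crossover_lemma}. Up to the cosmetic difference that the paper applies the crossover as a final additive correction rather than as a preprocessing step, this part is sound and matches the paper.

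The second case, however, has a genuine gap: you never construct the part of the walk after the break-point, and the quantity $d(\bar{A})+3d(\bar{B})+2$ is asserted rather than derived ("cross over to a vertex where the roles reverse and walk the remainder within $R$-type faces" does not describe a concrete walk, and it is not what is needed). The paper's argument is symmetric in the two endpoints of the $Q$-walk: let $i+1$ be the first and $j$ the last index with $s^{i+1},s^j>s_{\max}^R$ (the existence of such an index forces $s_{\max}^R<s_{\max}^Q$, hence $s'\le s_{\max}^R$ by Lemma \ref{lem:srange}, so the target side is liftable). One lifts the prefix up to index $i$ from one end and the suffix from index $j+1$ backwards from the other end, at cost $i(d_b(\bar{B})+1)$ and $(k-j-1)(d_b(\bar{B})+1)$ respectively; the two blocked steps each land, after one extra edge, on the face $\{s=s_{\max}^R\}$, at vertices $((x^{i+1})',s_{\max}^R,y^{i+1})$ and $((x^j)',s_{\max}^R,y^j)$. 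That face is the $1$-sum polyhedron $\mathcal{S}'$ from the proof of Theorem \ref{SQR_diam}, so these two vertices are at distance at most $d(\bar{A})+d(\bar{B})$ from each other. Summing, with the worst case $i+1=j$, $k=d(\bar{A})$, gives $(d(\bar{A})-1)(d_b(\bar{B})+1)+d(\bar{A})+d(\bar{B})+2$, and the $2d(\bar{B})$ crossover cost then produces the stated second term. Without identifying the middle segment on the extreme face $\{s=s_{\max}^R\}$ and the backward lifting of the suffix, your second bound does not follow. (A smaller imprecision: $s_{\min}^R\le s_{\min}^Q$ does not guarantee the strict inequality $s_{\min}^Q>s_{\min}^R$ needed in Lemma \ref{crossover_lemma}; the hypothesis mainly fixes which polyhedron's walk is lifted and which one supplies the corrections.)
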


\begin{proof}
We assume that $\mathcal{S}$ is simple.
Let $(x, s, y), (x', s', y') \in \mathcal{S}$ be vertices. First, we consider the case where $y$ and $y'$ consist of $m_2$ basic variables. By Proposition \ref{seriesVerClass}, both $(x, s), (x', s') \in Q$ are vertices, and thus, there exists a walk $(x, s) = (x^0, s^0), (x^1, s^1),\ldots ,(x^k, s^k) = (x', s')$ with $k \leq d(\bar{A}).$ 

Next, we assume that for all $i$, we have $s^i \leq s_{\max}^R.$ By Lemma \ref{correction_lemma}, the vertex $(x^0, s^0, y)$ is at most a distance $d_b(\bar{B}) + 1$ from a vertex $(x^1, s^1, y^1)$ for some $(0, y^1) \in R.$ By induction, $(x^i, s^i, y^i)$ is a vertex at most a distance $d_b(\bar{B}) + 1$ from a vertex $(x^{i+1}, s^{i+1}, y^{i+1}).$ Thus, $(x, s, y)$ is at most a distance $k(d_b(\bar{B})+1)$ from a vertex $(x', s', y^k).$ We note $(0, y^k), (0, y') \in R(c_b - s^k)$ are vertices. Thus, there exists a path of length at most $d(B)$ from $(0, y^k)$ to $(0, y')$ where $s$ remains fixed at $0$, by an application of Lemma \ref{degeneracy_lem} to the facet defined by $s=0$. Thus, the distance from $(x, s, y)$ to $(x', s', y')$ is at most $d(\bar{A})(d_b(\bar{B})+1) + d(B).$

Now, we suppose that there exists a vertex, $(x^{i+1}, s^{i+1})$, in the walk with $s^{i+1} > s_{\max}^R$. We further assume $i+1$ is minimal. This implies $s_{\max}^R < s_{\max}^Q$, and thus $s' \leq s_{\max}^R$ by Lemma \ref{lem:srange}. The distance from $(x, s, y)$ to $(x^i, s^i, y^i)$ for some $y^i$ is at most $i(d_b(\bar{B})+1)$ by the previous case and the minimality of $i+1$. As $s^{i+1} > s_{\max}^R$, the step $(x^i, s^i) \rightarrow (x^{i+1}, s^{i+1})$ does not lift successfully. Instead, the resulting vertex is $((x^{i+1})', s_{\max}^R, y^{i+1})$ for some $(x^{i+1})' \in \operatorname{conv}\{x^i, x^{i+1}\}.$ Let $j$ be the largest index such that $s^j > s_{\max}^R.$ The vertex $(x', s', y')$ is at most at distance $(k-j-1)(d_b(\bar{B})+1)+1$ from a vertex of form $((x^j)', s_{\max}^R, y^j)$, for some $(x^j)' \in \operatorname{conv}\{x^j, x^{j+1} \}$, again by the previous case and the maximality of $j$.

The proof of Theorem \ref{SQR_diam} from Subsection \ref{lattice_bound} implies that the distance between $((x^{i+1})', s_{\max}^R, y^{i+1})$ and $((x^j)', s_{\max}^R, y^j)$ is at most $d(\bar{A}) + d(\bar{B}).$ Therefore, the distance between $(x, s, y)$ and $(x', s', y')$ is at most $i(d_b(\bar{B})+1) + 1 + d(\bar{A}) + d(\bar{B}) + (k-j-1)(d_b(\bar{B})+1) + 1.$ In the worst case, $i+1 = j$ and $k = d(\bar{A})$. Thus, the bound is at most $(d(\bar{A})-1)(d_b(\bar{B})+1) + d(\bar{A}) + d(\bar{B}) + 2.$ 

If $(x, s, y)$ has basis split $(m_1, 1, m_2-1)$, then $(x, s, y)$ is at most $d(\bar{B})$ steps from a vertex in another category, by Lemma \ref{crossover_lemma}. If $(x', s', y')$ has basis split $(m_1, 1, m_2-1)$, the same is true. Thus, $2d(\bar{B})$ extra steps may be required.
\qed
\end{proof}

When $s_{\min}^Q \leq s_{\min}^R$, a bound can be derived by interchanging the roles of $Q$ and $R$. In this case the diameter is bounded by the maximum of $d(\bar{B})(d_b(\bar{A})+1) + d(A) + 2d(\bar{A})$ and $(d(\bar{B})-1)(d_b(\bar{A})+1) + d(\bar{A}) + 3d(\bar{B}) + 2$. 
    
If both $Q$ and $R$ satisfy Hirsch, then the bounds can be simplified further. 

\begin{cor}\label{cor:finalseriesbound}
Suppose that $Q$ and $R$ are polyhedra that satisfy Hirsch and $s_{\min}^R \leq s_{\min}^Q$. Then the diameter of the series-connection polyhedron, $\mathcal{S}$, is at most the maximum of
\begin{align*}
    \{ d(\bar{A})(d_b(\bar{B})+1) + 3d(\bar{B}), \,
    (d(\bar{A})-1)(d_b(\bar{B})+1) + d(\bar{A}) + 3d(\bar{B}) + 2 \}.
\end{align*}
\end{cor}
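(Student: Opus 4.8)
The plan is to specialize Theorem~\ref{series_corr_diam} to the Hirsch-satisfying setting and simplify both terms in the maximum. Recall that since $\mathcal{S}$ is simple with $m_1+m_2$ equality constraints, the Hirsch bound for $\mathcal{S}$ is $m_1+m_2$; more to the point, we will not need the Hirsch bound for $\mathcal{S}$ itself but rather the effect of the Hirsch hypotheses on $Q$ and $R$ on the quantity $d(B)$ appearing in the first term of the theorem's bound. The only discrepancy between the theorem's first term, $d(\bar A)(d_b(\bar B)+1) + d(B) + 2d(\bar B)$, and the claimed corollary bound, $d(\bar A)(d_b(\bar B)+1) + 3d(\bar B)$, is the replacement of $d(B)$ by $d(\bar B)$. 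The second term is identical in both statements, so no work is needed there.

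First I would invoke Lemma~\ref{degeneracy_lem}: the matrix $\bar B = \begin{bmatrix} 1 & b \\ 0 & B\end{bmatrix}$ has $B$ (up to the standard-form padding with the unit column, which as noted earlier does not affect diameters) as the constraint matrix of a face of any polyhedron built from $\bar B$. More precisely, in the proof of Theorem~\ref{series_corr_diam} the quantity $d(B)$ arises as a bound on a walk inside the facet $\{s=0\}$ of $\mathcal{S}$, whose constraint matrix after deleting the $s$-column and the now-redundant linking rows is exactly $B$ (together with $A$-blocks that are fixed along that walk). Since $R$ satisfies Hirsch for all right-hand sides, Lemma~\ref{degeneracy_lem} applied with $A':=\bar B$ and $A:=B$ (regarded as an equality-constraint matrix, after the trivial row reduction removing the first row) shows that every polyhedron with constraint matrix $B$ also satisfies Hirsch for all right-hand sides. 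Hence $d(B)\le d(\bar B)$: the diameter of a face cannot exceed the Hirsch bound of the ambient polyhedron once Hirsch holds for all right-hand sides. Substituting $d(B)\le d(\bar B)$ into the first term of Theorem~\ref{series_corr_diam} yields $d(\bar A)(d_b(\bar B)+1) + 3d(\bar B)$, as claimed.

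I would then remark that the second term requires no change: it is $(d(\bar A)-1)(d_b(\bar B)+1) + d(\bar A) + 3d(\bar B) + 2$ verbatim in both statements. Taking the maximum of the two simplified terms gives the corollary. The argument is short because all the real work — the correction-lemma machinery of Lemma~\ref{correction_lemma}, the crossover argument of Lemma~\ref{crossover_lemma}, and the extreme-value transfer from the proof of Theorem~\ref{SQR_diam} — is already packaged inside Theorem~\ref{series_corr_diam}. The one subtlety, and the only place where I expect to have to be careful, is confirming that the $d(B)$ term in the proof of Theorem~\ref{series_corr_diam} genuinely refers to a face whose constraint matrix is $B$ in the precise sense required by Lemma~\ref{degeneracy_lem}: one must check that deleting the $s$-column and the dependent linking rows from $S(\bar A,\bar B)$ restricted to $\{s=0\}$ leaves a block-triangular system whose $y$-part is governed solely by $B$, so that walks in $R(c_b-s^k)$ fixing $s=0$ transfer. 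This block structure is exactly the $1$-sum decomposition noted in the proof of Theorem~\ref{SQR_diam}, so the identification is immediate. Everything else is a direct substitution, and the corollary follows.
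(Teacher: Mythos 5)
Your overall route is exactly the paper's: specialize Theorem~\ref{series_corr_diam}, note that the second term is unchanged, and replace $d(B)$ by $d(\bar{B})$ in the first term using the Hirsch hypothesis (the paper does this in one sentence, via the equivalence of Hirsch with the non-revisiting conjecture). The gap is in your mechanism for that replacement. Lemma~\ref{degeneracy_lem} cannot be applied with $A' := \bar{B}$ and $A := B$: the lemma deletes a single column while keeping every row, so from $\bar{B} = \begin{bmatrix} 1 & b \\ 0 & B \end{bmatrix}$ it yields Hirsch for all right-hand sides for the matrix $\begin{bmatrix} b \\ B \end{bmatrix}$, not for $B$. Discarding the row $b$ afterwards is not a ``trivial row reduction''; it removes a constraint and changes the polyhedron. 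Indeed, the face that carries the $d(B)$-walk in the proof of Theorem~\ref{series_corr_diam} --- the walk from $(0, y^k)$ to $(0, y')$ with $x$ and $s$ fixed --- is $\{ y : by = c_b - s^k,\ By = c_B,\ y \geq 0 \}$, whose constraint matrix is $\begin{bmatrix} b \\ B \end{bmatrix}$, with the row $b$ still active; likewise, the $1$-sum decomposition you cite from the proof of Theorem~\ref{SQR_diam} has $\begin{bmatrix} b \\ B \end{bmatrix}$, not $B$, as its second block. So the claim that the $y$-part is ``governed solely by $B$'' does not hold.

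Moreover, even granting what Lemma~\ref{degeneracy_lem} does give (Hirsch for all right-hand sides for $\begin{bmatrix} b \\ B \end{bmatrix}$-polyhedra), the bound you obtain on that walk is its own Hirsch bound $m_2$, and $m_2$ may strictly exceed $d(\bar{B})$; substituting it would give $d(\bar{A})(d_b(\bar{B})+1) + m_2 + 2d(\bar{B})$, which is weaker than the stated first term $d(\bar{A})(d_b(\bar{B})+1) + 3d(\bar{B})$ whenever $d(\bar{B}) < m_2$. What the corollary needs, and what the paper asserts, is the comparison $d(B) \leq d(\bar{B})$ itself --- the statement that under Hirsch for all right-hand sides the diameter of a face cannot exceed the diameter of the ambient polyhedron, obtained from the non-revisiting property passing to faces --- not merely that the smaller system satisfies its own Hirsch bound. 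Replace the Lemma~\ref{degeneracy_lem} step by that face/non-revisiting comparison (or prove $d(B) \leq d(\bar{B})$ directly); the remainder of your proposal, including the observation that the second term requires no change, matches the paper and is fine.
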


The result follows from the fact that Hirsch implies that $d(A) \leq d(\bar{A})$ and $d(B) \leq d(\bar{B})$, which follows from its equivalence with the non-revisiting conjecture.

\section{Conclusion and Outlook}\label{sec:conclusion}

In this work, we established diameter bounds for the parallel and series connection of polyhedra. These constructions are based on the corresponding classical matroid operations. For the parallel-connection polyhedron, we were able to devise a bound that adds a constant $2$ to the diameters of the original polyhedra (and would exceed the claimed Hirsch bound by $3$). For the series-connection polyhedron, we obtained a bound that adds the range of the shared variable to the original diameters.  The latter can be translated into a quadratic term in the Hirsch bound, provided that distances are realized by so-called $s$-monotone walks; this is a much weaker assumption than working with general monotone diameters. While the series-connection bounds require that the underlying polyhedra satisfy the Hirsch conjecture, and while the derived bounds are weaker, they hold for a broader class of polyhedra. 

There are a few natural directions for next steps. First, validity of the Hirsch conjecture for the connection of two Hirsch-satisfying polyhedra remains open. In particular, in all but a few special cases, the constant added in the parallel connection is a strict overestimation. For the series-connection polyhedron, there is even more room for improvement towards a linear bound or the actual Hirsch bound. Corollary \ref{cor:finalseriesbound} also raises a more general question about the relation of the $s$-bounded diameter and the traditional combinatorial diameter: if the $s$-bounded diameter is polynomial in the traditional diameter, then does the series connection of two polyhedra that satisfy the polynomial Hirsch conjecture also satisfy the polynomial Hirsch conjecture? By design, our approach is limited to transferring walks in the original polyhedra to the connected polyhedron. For possible improvements, we expect that one has to leave this setting; i.e., one has to construct walks that are known to be short, but do not correspond to walks in the original polyhedra.

Second, parallel and series connections are, in particular, meaningful due to their intimate relation to the $2$-sum \cite{oxley-06}. While one can obtain a $2$-sum through deletion or contraction on the matrices, such operations generally do not allow for the immediate transfer of diameter bounds: it is well-known that faces of polyhedra that violate the Hirsch conjecture may themselves satisfy the bound and, conversely, a polyhedron may satisfy the Hirsch conjecture, but some of its faces may not. However, our approach to the parallel connection itself actually comes close to being a bound for one of the possible phrasings of $2$-sums as in \cite{s-98}; see the introduction of Section \ref{PCSec}. We essentially proved a bound that holds if one of the two matrices has a special form -- the final step is to generalize the special form to this matrix. 

And third, general $k$-sums for $k>2$ and the associated parallel and series connections on (complete) $k$-edge subgraphs 
are of similar interest to $2$-sums. In a mathematical programming sense, they correspond to a more general linking of two subsystems; in the construction of totally-unimodular matrices, both $2$- and $3$-sums are fundamental operations. It would be promising to extend our approach  -- the transfer of walks from the original polyhedra -- to $3$-sums and general $k$-sums. We expect that polynomial bounds of at most power $k$ are possible for both types of connections. 


\subsection*{Acknowledgments}

We would like to thank Antoine Deza for insightful discussions about the diameters of lattice polytopes.


This work was supported by Air Force Office of Scientific Research grant FA9550-21-1-0233 and NSF grant 2006183, Algorithmic Foundations, Division of Computing and Communication Foundations. 

\bibliography{literature}
\bibliographystyle{plain}

\end{document}